\def\Z {\mathbb{Z}} 
\def\R {\mathbb{R}}  
\def\C {\mathbb{C}}
\def \co{\colon\!}
\def \ftnote{\let\thefootnote\relax\footnotetext}
\def\Imm{{\mathop\mathrm{Imm}}}
\newcommand{\hra}{\hookrightarrow}
\newcommand{\imra}{\looparrowright}
\newtheorem{theorem}{Theorem}
\newtheorem{lemma}[theorem]{Lemma}
\newtheorem{corollary}[theorem]{Corollary}
\theoremstyle{remark}
\newtheorem{remark}{Remark} 
\newtheorem{example}{Example}
\title[]{On non-orientable surfaces embedded in 4-manifolds}
\date{\today}
\begin{document}
\author{David Auckly, Rustam Sadykov}
\thanks{The first author is partially supported by Simons Foundation grant 585139, and National Science Foundation grant DMS 1952755.}
\begin{abstract}

We find conditions under which a non-orientable closed surface  smoothly embedded 
into an orientable  $4$-manifold $X$  can be represented
by a connected sum of an embedded closed surface in $X$ and 
an unknotted projective plane in a $4$-sphere. This allows us to extend 
the Gabai $4$-dimensional light bulb theorem and the 
Auckly-Kim-Melvin-Ruberman-Schwartz ``one is enough" theorem to the case 
of non-orientable surfaces.   
\end{abstract}
\maketitle

\section{Introduction}

The goal of the present note is to determine conditions under which a non-orientable closed surface $S$ smoothly embedded into a $4$-manifold $X$ admits a splitting into the connected sum of an embedded  surface $S'$ and an unknotted projective plane $P^2$ in a $4$-sphere,  
 i.e., there exists a diffeomorphism of pairs
\begin{eqnarray}\label{eq:first}
      (X, S) \cong  (X, S')\# (S^4, P^2).
\end{eqnarray}

The important ingredient for the existence of the splitting is the existence of a geometric dual for $S$. 
We say that a sphere $G$ embedded into $X$ is a \emph{geometric dual} for a connected surface $S$ if the normal Euler number of $G$ is trivial and $G$ intersects $S$ transversally at a unique point. We note that some authors, including Gabai~\cite{Ga} and Freedman-Quinn~\cite{FQ}, use the term (embedded) \emph{transverse sphere} to refer to a geometric dual.
The surface $S$ is \emph{$G$-inessential} if the induced homomorphism $\pi_1(S\setminus G)\to \pi_1(X\setminus G)$ is trivial. When there is such a geometric dual, any $P^2$ summand in $S$ can be split off. 

\begin{theorem}\label{th:1a}  Let $S$ be a connected non-orientable closed surface embedded in an orientable $4$-manifold $X$. Suppose that $S$ is $G$-inessential for a geometric dual $G$. Let $P^2$ be a projective plane summand of $S$. 
Then the pair $(X, S)$ splits as in (\ref{eq:first}) with $P^2$ unknotted, and with the surface $S'$ still $G$-inessential for the geometric dual $G$. 
\end{theorem}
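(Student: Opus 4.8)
The plan is to realize the projective plane summand by a standardly embedded M\"obius band inside a small $4$-ball and then to split that ball off, using the transverse sphere $G$ to maneuver the M\"obius band into standard position exactly as the transverse sphere is used in Gabai's $4$-dimensional light bulb theorem and in the Auckly-Kim-Melvin-Ruberman-Schwartz argument. Concretely, the decomposition $S = S' \# \mathbb{RP}^2$ is realized by a two-sided separating simple closed curve $\gamma \subset S$ that cuts $S$ into the punctured surface $S_0' := S' \setminus D^2$ and a M\"obius band $M = N_S(c)$, where $c$ is the core of $M$; capping $\gamma = \partial M$ by a disk turns $S_0'$ back into $S'$. After an isotopy of $\gamma$ inside $S$ we may assume that the transverse intersection point $p = G \cap S$ lies in the interior of $S_0'$, so that $c$ and all of $S_0'$ are disjoint from $G$.

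The theorem then reduces to the following statement: there is an ambient isotopy of $X$ after which a $4$-ball $B$ can be found with $B \cap S$ a standardly embedded M\"obius band $M_0$ (one whose double in $S^4 = B \cup D^4$ is an unknotted projective plane), $\partial B \cap S = \partial M_0$ an unknot in $\partial B = S^3$, $\overline{X \setminus B} \cap S = S_0'$, and $G$ disjoint from $B$. Indeed, given such a $B$, choose a trivial disk $D' \subset \overline{X \setminus B}$ bounded by $\partial M_0$ and disjoint from $G$, and set $S' := S_0' \cup D'$; then $(X, S)$ is the union of the pairs $(B, M_0)$ and $(\overline{X \setminus B}, S_0')$ along $(S^3, \mathrm{unknot})$, and inserting a trivial ball--disk pair identifies this with $(X, S') \# (S^4, P^2)$, where $P^2 = M_0 \cup D^2 \subset S^4$ is the unknotted projective plane; this is the splitting (\ref{eq:first}). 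Here $S'$ is again $G$-inessential: $\pi_1(S' \setminus p)$ is, by van Kampen, a quotient of $\pi_1(S_0' \setminus p)$, and the composite $\pi_1(S_0' \setminus p) \to \pi_1(S \setminus p) \to \pi_1(X \setminus G)$ is trivial by hypothesis, so the induced map $\pi_1(S' \setminus p) \to \pi_1(X \setminus G)$ is trivial too; and $G$ is still a transverse sphere for $S'$ because its normal Euler number is unchanged and $G \cap S' = \{p\}$. (If the isotopy carries $G$ to some $G'$, apply its inverse at the end; $G$-inessentiality is a diffeomorphism invariant, so nothing is lost.)

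It remains to produce the ball $B$, which is the heart of the matter: one must isotope $S$ so that $M$ lies in a ball as a standard M\"obius band with $S_0'$ outside. By $G$-inessentiality the core $c$, and in fact every loop of $S_0'$, is null-homotopic in $X \setminus G$. Using $G$ one standardizes $M$ in the light-bulb manner --- finger moves, Whitney moves, and tube slides across $G$, legitimate because $G$ is an embedded sphere meeting $S$ in a single point with trivial normal Euler number --- first isotoping $S_0'$ off a neighborhood of $c$, then unknotting $c$, and finally straightening the M\"obius band around it; the loops that must be contracted in order to undo the finger moves are precisely the ones controlled by $G$-inessentiality, and the whole process keeps $G$ a transverse sphere with $S$ inessential, just as in the orientable theorems. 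The genuinely new feature, and the step I expect to be the main obstacle, is that the normal data of $M$ is the \emph{twisted} $D^2$-bundle over a M\"obius band rather than a product, so the guiding tubes and all framings must be transported through the non-orientable normal line field of $M$; in particular one must verify at the end that the M\"obius band obtained is the standard one, with double an unknotted $\mathbb{RP}^2$ (normal Euler number $\pm 2$), any excess twisting being pushed off into $S_0'$ and thus absorbed harmlessly into $S'$. Granting this standardization, the reduction above completes the proof.
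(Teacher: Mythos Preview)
Your reduction is the same as the paper's Lemma~\ref{l:1}: one needs an open $4$-ball $U$ meeting $S$ in a M\"obius band $M$ with $\partial M$ unknotted in $\partial U$, disjoint from $G$. Your argument that $S'$ remains $G$-inessential is fine. The gap is in producing $U$. You propose to ``standardize $M$ in the light-bulb manner'' by isotoping $S$, but you never actually build a candidate ball, and the light-bulb machinery (finger moves, Whitney moves) governs homotopy-implies-isotopy for surfaces, not standardization of a M\"obius subband inside an unspecified ball. Phrases like ``unknotting $c$'' are empty in dimension $4$ (every circle is unknotted), and ``pushing excess twisting into $S_0'$'' is not a move that exists: the twisting of $M$ relative to a ball is a relative framing datum that you cannot just shove across $\partial M$ by an ambient isotopy. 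You yourself flag this step as the main obstacle and then write ``Granting this standardization\ldots'', so the proof as written is incomplete precisely at the crux.

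The paper's route is different in a concrete and instructive way: instead of isotoping $S$, it builds the ball from a \emph{core disc}. Since $S$ is $G$-inessential, the core curve $\alpha$ bounds an immersed disc $D$ in $X\setminus G$; finger moves make $D$ embedded, and interior intersections of $D$ with $S$ are tubed off over parallel copies of $G$. Now $D$ is an embedded disc with $D\cap S=\alpha$, and a regular neighborhood of $D$ is a $4$-ball $U$ containing $M$. Whether $\partial M$ is unknotted in $\partial U$ is then a framing question, analyzed via a Kirby picture of a neighborhood of $\alpha\cup D\cup G$: the M\"obius band contributes a half-integer twist $k+\tfrac12$ and $D$ contributes a $2$-handle $h_D$ with framing $m$. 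Full twists of the $1$-handle adjust $k$ (reducing to $k\in\{0,-1\}$); slides of $h_D$ over the $1$-handle change $m$ by $\pm 2$ at the cost of linking $h_D$ with the $2$-handle $h_S$ of $S$; and the transverse sphere $G$ provides a $0$-framed meridional $2$-handle that unlinks $h_D$ from $h_S$. After these moves $m=0$, so $\partial M$ is unknotted in $\partial U$ and Lemma~\ref{l:1} applies. The point to take away is that $G$ is used twice --- first to clear $D$ of intersections with $S$, and second to kill the relative framing obstruction --- and it is the disc $D$, not $S$, that gets modified.
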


\begin{remark} Let $M\subset S^3\subset S^4$ be the standard M\"{o}bius band. The boundary of $M$ can be pushed radially into the upper hemisphere of $S^4$ where it bounds a unique disc $D^2$ up to isotopy. The union of the M\"{o}bius band and the disc $D^2$ is an embedded, unknotted projective plane $P^2$ in $S^4$. Depending on the sign of the half-twist of the M\"{o}bius band, there are two non-isotopic unknotted projective planes, $P^2_+$ and $P^2_-$. These can be detected by one of  two invariants:  the normal Euler number, or the Brown invariant, see section~\ref{s:EuBr}.  
\end{remark}

\begin{remark}\label{rem:Eu}  If the Euler characteristic of $S$ is odd, then we may choose the projective plane $P^2$ in $S$ so that the surface $S'$ in the splitting (\ref{eq:first}) is orientable. When the Euler characteristic of $S$ is even, we may split off two unknotted projective planes leaving an orientable surface. If the Euler characteristic is less than one, we may split off an unknotted projective plane so that the resulting surface $S'$ is still non-orientable. It also follows that   there is a diffeomorphism of pairs
\[
    (X, S)\cong (X, S^2)\,\,\#\,\, k(S^4, P^2_+)\,\,\#\,\, \ell(S^4, P^2_-)
\]
where $S^2$ is a $2$-sphere embedded into $X$, and $k+\ell$ is the cross-cap number of $S$.
\end{remark}

As a consequence of  the splitting theorem (Theorem~\ref{th:1a}) we show that a version of the recent Gabai light bulb theorem (Theorem~\ref{th:5}) holds true for non-orientable surfaces as well.  

\begin{theorem}\label{th:6b} Let $X$ be an orientable $4$-manifold such that $\pi_1(X)$ has no $2$-torsion. Let $S_0$ and $S_1$ be 
two homotopic embedded $G$-inessential closed surfaces with common geometric dual $G$.  Suppose that the normal Euler numbers of $S_0$ and $S_1$ agree.  Suppose that the surface $S_0$ agrees in a neighborhood of $S_0\cap G$ with $S_1$. 
Then the surfaces are ambiently isotopic via an isotopy that fixes the geometric dual pointwise.

\end{theorem}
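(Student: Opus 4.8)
The strategy is to reduce the non-orientable case to the known orientable light bulb theorem (Theorem \ref{th:5}) by stripping off the projective plane summands via the splitting theorem. First I would apply Theorem \ref{th:1a} repeatedly (as in Remark \ref{rem:Eu}) to each of $S_1$ and $S_2$: since each $S_i$ is $G$-inessential for the common transverse sphere $G$, I can write $(X, S_i) \approx (X, S_i') \# k_i(S^4, P^2_+) \# \ell_i(S^4, P^2_-)$, where $S_i'$ is an embedded $2$-sphere still admitting $G$ as a $G$-inessential transverse sphere, and the connect sums are performed in small balls disjoint from $G$. Because $S_1$ and $S_2$ are homotopic, they have the same cross-cap number and the same class in $H_2(X;\Z/2)$, and because their normal Euler numbers agree (this is where the hypothesis on Euler numbers enters), the counts match: I would argue $k_1 = k_2$ and $\ell_1 = \ell_2$, using that $P^2_+$ and $P^2_-$ are distinguished by normal Euler number (or the Brown invariant) as in the remark after Theorem \ref{th:1a}, together with additivity of the normal Euler number under connected sum.

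Next I would check that the two $2$-spheres $S_1'$ and $S_2'$ obtained this way are themselves homotopic in $X$ with common transverse sphere $G$: homotopy is preserved because removing a connect-summand in a ball disjoint from $G$ does not change the homotopy class once the cross-cap and Euler-number contributions have been accounted for, and the normal Euler numbers of $S_1'$ and $S_2'$ are both $0$ (an embedded $2$-sphere homotopic to one with a transverse sphere) — in any case they agree. Since $\pi_1(X)$ has no $2$-torsion, it certainly has the property needed to run the orientable Gabai-type theorem; applying Theorem \ref{th:5} to $S_1'$ and $S_2'$ gives an ambient isotopy of $X$, fixing $G$ pointwise, carrying $S_1'$ to $S_2'$.

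Finally I would reassemble: the ambient isotopy taking $S_1'$ to $S_2'$ can be extended by the identity across the small balls where the $P^2_\pm$ summands live (or, more carefully, composed with an isotopy realizing the diffeomorphism of pairs that recombines the summands), yielding an ambient isotopy of $(X, S_1)$ to $(X, S_2)$ fixing $G$ pointwise. The main obstacle I anticipate is the bookkeeping in the middle step: one must ensure that the diffeomorphisms of pairs produced by Theorem \ref{th:1a} can be chosen to be supported away from $G$ and to be compatible with each other, so that the splittings of $S_1$ and $S_2$ are ``parallel'' enough that an isotopy of the sphere parts lifts to an isotopy of the whole surfaces; and that the hypothesis equating normal Euler numbers really does force $k_1=k_2$ and $\ell_1=\ell_2$ rather than merely $k_1-\ell_1 = k_2-\ell_2$, which would require the extra homotopy input (equal $\Z/2$-homology class and cross-cap number) to pin down the individual counts. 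Handling the parity subtlety from Remark \ref{rem:Eu} (odd versus even Euler characteristic, hence whether one or two projective planes must be split off to reach an orientable sphere) is the other place where care is needed.
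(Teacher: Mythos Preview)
Your overall strategy matches the paper's: split off projective planes, apply the orientable Gabai theorem to the remaining surfaces, then reassemble. However, the ``main obstacle'' you anticipate is not merely bookkeeping---it is the entire content of the proof, and your proposed resolution of it does not work.

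The splitting produced by Theorem~\ref{th:1a} is \emph{not} unique, even up to homotopy of the summand $S'$. The paper's Examples~\ref{ex:nonun2} and~\ref{ex:nonun} show exactly this: changing the core disk $D$ to $D\#A$ changes $[S']$ by $2[A]$, and in Example~\ref{ex:nonun} the \emph{same} surface splits as $(X,S')\#(S^4,P_+)$ and as $(X,S'')\#(S^4,P_-)$ with $[S']\neq[S'']$. So the counts $k_i,\ell_i$ are not invariants of $S_i$, your argument that $k_1=k_2$ and $\ell_1=\ell_2$ cannot succeed, and your claim that ``removing a connect-summand in a ball disjoint from $G$ does not change the homotopy class'' is false as stated. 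There is no general reason for independently-produced splittings $S_1'$ and $S_2'$ to be homotopic.

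The paper handles this by a different mechanism. Rather than splitting $S_1$ and $S_2$ independently, it first isotopes $S_2$ so that $S_1$ and $S_2$ agree outside a small disk $U$ and are regularly homotopic rel $S_1\setminus U$ (Lemma~\ref{ag1sk}). Then it chooses a core disk $D_1$ for $S_1$ with $\partial D_1$ outside $U$, and constructs a core disk $D_2$ for $S_2$ with $\partial D_2=\partial D_1$ and $D_1\cup_\partial D_2$ null-homotopic (Lemma~\ref{nhs}); this last condition is exactly what forces $S_1'\simeq S_2'$. The proof of Lemma~\ref{nhs} is the technical heart: it requires an equivariant intersection-number argument in the universal cover of $X\setminus G$, with a correction term $\tfrac12\,S_t^1\cdot S_t^\tau$ to account for intersection points sliding across $\partial D_0$ during the regular homotopy, and a separate parity argument in the case where the complement of $\partial D_0$ in $S$ is still non-orientable. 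Finally, the reassembly step is justified by Lemma~\ref{welld}, which shows that an internal sum with $(S^4,R)$ is well-defined up to isotopy---this is not automatic and is proved separately. The odd/even Euler characteristic dichotomy is handled by splitting off one $P^2$ at a time and inducting, rather than going all the way down to a sphere in one step.
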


In the orientable case considered by Gabai  the normal Euler number does not play a role. However, this invariant is critical when the surfaces are non-orientable, see Remark~\ref{rk:newd}. Theorem~\ref{th:6b} has a number of applications. 

One source of applications is to stabilization of smoothly knotted surfaces embedded in $4$-manifolds. If $(X,S)$ is a pair consisting of a $4$-manifold  and an embedded surface, one has four types of stabilization: external stabilization $(X,S)\# (S^2\times S^2,\emptyset)$, pairwise stabilization  $(X,S)\# (S^2\times S^2,\{\text{pt}\}\times S^2)$, internal stabilization $(X,S)\# (S^4,T^2)$ and non-orientable internal stabilization  $(X,S)\# (S^4,P^2)$.   Baykur and Sunukjian proved that a sufficient number of internal stabilizations results in isotopic surfaces \cite{baykur-sunukjian:surfaces}.  In \cite{Ka}, S.~Kamada shows that  two embedded surfaces  become isotopic after enough internal non-orientable stabilizations.  

We say that an embedded surface $S$ in a $4$-manifold $X$ is \emph{$\pi_1$-trivial} if the homomorphism $\pi_1S\to \pi_1X$ induced by the inclusion $S\subset X$ is trivial. 
The hypothesis that a $\pi_1$-trivial surface $S$ is $G$-inessential for a geometric dual $G$ is always satisfied after taking the connected sum of the pair $(X, S)$ with $(S^2\times S^2, \{*\}\times S^2)$ which immediately implies the following:

\begin{corollary} Let $X$ be an orientable $4$-manifold such that $\pi_1(X)$ has no $2$-torsion. Then regularly homotopic,  $\pi_1$-trivial,  embedded surfaces in $X$ become isotopic after {\it just one} pairwise stabilization.
\end{corollary}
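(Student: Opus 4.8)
The plan is to deduce the Corollary directly from Theorem~\ref{th:6b} by the standard trick of converting a regular homotopy into a homotopy between surfaces equipped with a transverse sphere. First I would take two embedded surfaces $S_1$ and $S_2$ in $X$ that are regularly homotopic, and form the pairwise stabilization $(X', S_i') = (X, S_i)\#(S^2\times S^2, \{*\}\times S^2)$ for $i=1,2$, where the connected sum is performed along a ball disjoint from the surfaces (the surface summand being $\{*\}\times S^2$, glued to $S_i$ along a trivial disk). The key geometric observation, which I would spell out, is that $\{*\}\times S^2$ in $S^2\times S^2$ carries $S^2\times\{*\}$ as a transverse sphere of square zero meeting it in one point; after the connected sum this provides an embedded transverse sphere $G$ for each $S_i'$ with trivial normal Euler number. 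Moreover, since $G$ lies in the $S^2\times S^2$ summand away from the rest of $S_i'$, and $\pi_1$ of the complement is controlled by a Mayer–Vietoris/van Kampen argument, $S_i'$ is automatically $G$-inessential — the loops in $S_i'\setminus G$ either lie in $X\setminus(\text{ball})$ or in the stabilizing piece, and in either case they die in $\pi_1(X'\setminus G)$; this is the content of the sentence in the excerpt ``The hypothesis that $S$ is $G$-inessential $\ldots$ is always satisfied after taking the connected sum $\ldots$''.

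Next I would check the remaining hypotheses of Theorem~\ref{th:6b}. The surfaces $S_1'$ and $S_2'$ are homotopic in $X'$: a regular homotopy from $S_1$ to $S_2$ in $X$ is in particular a homotopy, and it extends over the stabilization (which is performed in a fixed ball that we may keep disjoint from the track of the homotopy, after a small perturbation), so $S_1'\simeq S_2'$. They share the common transverse sphere $G$ coming from the $S^2\times S^2$ summand, which is the same for both since the stabilization is performed identically. Finally, the normal Euler numbers agree: regularly homotopic surfaces have the same normal bundle, hence the same normal Euler number, and connected-summing with $\{*\}\times S^2$ (which has trivial normal bundle in $S^2\times S^2$) adds the same quantity $0$ to both. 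Thus all hypotheses of Theorem~\ref{th:6b} are met, and we conclude that $S_1'$ and $S_2'$ are ambiently isotopic in $X'$, i.e.\ $(X,S_1)$ and $(X,S_2)$ become isotopic after one pairwise stabilization.

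The main obstacle I anticipate is purely bookkeeping rather than conceptual: verifying cleanly that $S_i'$ is $G$-inessential and that $G$ genuinely has trivial normal Euler number and meets $S_i'$ transversally in exactly one point after the connected sum. This requires being careful about where the connect-sum ball sits relative to the surface and the transverse sphere, and a van Kampen computation of $\pi_1(X'\setminus G)$ in terms of $\pi_1(X\setminus B^4)\cong\pi_1(X)$ and $\pi_1((S^2\times S^2)\setminus(G\cup B^4))$. One must also confirm that ``homotopic'' (rather than ``regularly homotopic'') suffices as the input to Theorem~\ref{th:6b}: the theorem only asks for a homotopy of surfaces together with the Euler-number matching, and it is precisely the Euler-number hypothesis that absorbs the difference between homotopy and regular homotopy, so the argument goes through. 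A secondary point worth a remark is that the no-$2$-torsion hypothesis on $\pi_1(X)$ is inherited by $\pi_1(X') = \pi_1(X)$, since connected sum with the simply connected manifold $S^2\times S^2$ does not change the fundamental group, so Theorem~\ref{th:6b} applies to $(X', S_i')$.
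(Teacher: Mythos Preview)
Your proposal is correct and follows exactly the paper's approach: the paper's entire proof is the one sentence preceding the corollary, observing that after the pairwise stabilization the surface acquires $S^2\times\{*\}$ as a transverse sphere and is $G$-inessential, so Theorem~\ref{th:6b} applies. Your write-up simply unpacks that sentence (with one small slip: the connected-sum $4$-ball in a pairwise stabilization must meet each $S_i$ in a standard $2$-disk, not be ``disjoint from the surfaces'' as you wrote).
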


Quinn \cite{quinn} and Perron \cite{perron1,perron2} show that topologically isotopic surfaces in simply-connected $4$-manifolds
 become isotopic after sufficiently many external stabilizations. Auckly, Kim, Melvin, Ruberman, and Schwartz proved that \emph{just one} external stabilization was enough for ordinary topologically isotopic orientable surfaces \cite{AKMRS}.
A second consequence of the splitting theorem is a non-orientable version of this ``one is enough" theorem. 

\begin{theorem}\label{th:8a}  Let $S_0$ and $S_1$ be regularly homotopic (possibly non-orientable), embedded surfaces in an orientable $4$-manifold $X$, each with simply-connected complement. 
If the homology class $[S_0]=[S_1]$  is ordinary, then $S_0$ is isotopic to $S_1$ in $X\# (S^2\times S^2)$. If the homology class is characteristic, then the surfaces are isotopic in $X\# (S^2\tilde\times S^2)$.    
\end{theorem}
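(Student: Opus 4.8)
The plan is to reduce Theorem~\ref{th:8a} to the light bulb theorems already in hand by splitting off all of the projective-plane summands. First I would collect what the hypotheses give for free. A loop in $X$ can be pushed off the codimension-two submanifold $S_i$, so $\pi_1(X\setminus S_i)\to\pi_1(X)$ is onto and hence $\pi_1(X)=1$; thus $X$ is orientable and both $X\#(S^2\times S^2)$ and $X\#(S^2\tilde\times S^2)$ are simply connected, so the ``no $2$-torsion'' hypothesis of Theorem~\ref{th:6b} is vacuous. Because $X\setminus S_i$ is simply connected, $H_1(X\setminus S_i;\Z_2)=0$, and the long exact sequence of $(X,X\setminus S_i)$ together with the Thom isomorphism $H_2(X,X\setminus S_i;\Z_2)\cong\Z_2$ shows the map $H_2(X;\Z_2)\to\Z_2$, $\sigma\mapsto\sigma\cdot[S_i]$, is onto; in particular $[S_i]\ne 0$. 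Finally a regular homotopy is a homotopy, so $S_1$ and $S_2$ are homotopic as maps, it presents them as the same abstract surface (hence with the same cross-cap number $c$), and it carries the normal bundle of $S_1$ isomorphically onto that of $S_2$, so the normal Euler numbers agree. (This last point is the reason for assuming a \emph{regular} homotopy rather than a homotopy: $P^2_+$ and $P^2_-$ are homotopic in $S^4$ but not regularly homotopic, precisely because their normal Euler numbers differ.)

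Next I would produce a transverse sphere. Since $X$ is simply connected its $\Z_2$-intersection form is non-degenerate. If $[S_i]\in H_2(X;\Z_2)$ is ordinary it stays ordinary in $X'=X\#(S^2\times S^2)$, because the new summand is even; if it is characteristic then in $X'=X\#(S^2\tilde\times S^2)$ it is no longer characteristic, since the new summand $\langle 1\rangle\oplus\langle-1\rangle$ contributes to the Wu class. In either case $[S_i]$ is ordinary and nonzero in $X'$, so there is a class $\gamma$ with $\gamma\cdot[S_i]$ odd which, after modification by the new summand, has $\gamma^2=0$; using $\pi_1(X'\setminus S_i)=1$ one realizes $\gamma$ by an embedded sphere meeting $S_i$ transversally in a single point and with trivial normal Euler number, following the construction of \cite{AKMRS}. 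Since $S_1$ and $S_2$ are homotopic in $X'$ and each now carries a transverse sphere, the uniqueness of transverse spheres (again as in \cite{AKMRS}) lets us isotope one onto the other, giving a common transverse sphere $G$; and $G$-inessentiality is then automatic, because any loop in $S_i\setminus G$ bounds a disc in the simply connected $X'\setminus S_i$ which can be pushed off the trivially framed sphere $G$. Carrying this out compatibly for $S_1$ and $S_2$, I would then apply Theorem~\ref{th:1a} repeatedly, in the form of Remark~\ref{rem:Eu}, to obtain diffeomorphisms of pairs $(X',S_i)\approx (X',\Sigma_i)\,\#\,k_i(S^4,P^2_+)\,\#\,\ell_i(S^4,P^2_-)$ with $\Sigma_i$ an embedded $2$-sphere still $G$-inessential for $G$, $\Sigma_1$ homotopic to $\Sigma_2$, and $k_i+\ell_i=c$. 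The counts match across $i$: since $\Sigma_1$ and $\Sigma_2$ are homotopic they have equal self-intersection, and because $e(S_i)$ is this self-intersection plus $2k_i-2\ell_i$ while the $e(S_i)$ agree, we get $k_1-\ell_1=k_2-\ell_2$; together with $k_1+\ell_1=k_2+\ell_2$ this forces $k_1=k_2$ and $\ell_1=\ell_2$.

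Finally, $\Sigma_1$ and $\Sigma_2$ are homotopic $2$-spheres in the simply connected $X'$, both $G$-inessential for the common transverse sphere $G$, so the Gabai light bulb theorem (Theorem~\ref{th:5}) makes them ambiently isotopic via an isotopy fixing $G$; gluing the now identical strings $k(S^4,P^2_+)\#\ell(S^4,P^2_-)$ back on yields $(X',S_1)\approx(X',S_2)$, i.e.\ $S_1$ is isotopic to $S_2$ in $X\#(S^2\times S^2)$ when $[S_i]$ is ordinary and in $X\#(S^2\tilde\times S^2)$ when it is characteristic. (One could equally bypass the splitting and apply Theorem~\ref{th:6b} directly to $S_1,S_2$ once the common transverse sphere and the equality of normal Euler numbers are in place.) I expect the genuine obstacle to be the construction of the transverse sphere after only a single stabilization and the verification that the spheres for $S_1$ and $S_2$ can be made to coincide --- the technical core, inherited from \cite{AKMRS} --- together with arranging the splittings compatibly and the attendant Euler-number and cross-cap bookkeeping, which is where non-orientability genuinely enters and where Theorem~\ref{th:1a} is indispensable.
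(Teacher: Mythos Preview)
Your parenthetical remark---apply Theorem~\ref{th:6b} directly once a common transverse sphere is in hand---is exactly the paper's proof, and is the route you should take. The long detour through repeated splitting and Theorem~\ref{th:5} is not wrong in spirit, but the step ``$\Sigma_1$ homotopic to $\Sigma_2$'' is a genuine gap: \S\ref{s:nu} of the paper (Examples~\ref{ex:nonun2} and~\ref{ex:nonun}) shows that the surface left after splitting depends on the core disk, even up to homology, so ``carrying this out compatibly'' is exactly the content of Lemmas~\ref{ag1sk} and~\ref{nhs}. Those lemmas are the heart of the proof of Theorem~\ref{th:6b} itself, so your detour either reproves Theorem~\ref{th:6b} or presupposes it. Since you may quote Theorem~\ref{th:6b}, just do so and drop the splitting argument; the Euler-number and cross-cap bookkeeping then becomes unnecessary.

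The point where your write-up and the paper genuinely diverge is the construction of a \emph{common} transverse sphere. You build a sphere for each $S_i$ separately and then invoke a ``uniqueness of transverse spheres'' to isotope one onto the other; no such statement is available in the form you need. The paper instead builds a single immersed dual $\Sigma$ to $S_1$ (meridian disk capped with a normal fiber, so $\#(\Sigma\cap S_1)=1$ from the outset) and then reduces $\#(\Sigma\cap S_2)$ to one by immersed Whitney moves in $X\setminus S_1$. Here is where non-orientability actually enters in the paper's argument (and not, as you suggest, in the splitting): only the \emph{parity} of $\Sigma\cdot S_2$ is well defined, but since $S_2$ is non-orientable one can choose the Whitney arc in $S_2$ so that any pair of excess points acquires opposite relative sign, and the Whitney disk can be taken to miss $S_1$ after finger moves. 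Only after $\Sigma$ meets both $S_1$ and $S_2$ once does one stabilize (pairwise, summing $\Sigma$ with $\{\mathrm{pt}\}\times S^2$) and tube off self-intersections to get the embedded square-zero common $G$. You correctly flag this as ``the technical core''; the paper's contribution is precisely the one-paragraph modification of the \cite{AKMRS} argument needed to make it go through when $S_2$ is non-orientable.
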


We note that by Theorem~\ref{l:Gaba} below homotopic embeddings of a closed connected orientable surface into an orientable 4-manifold are always regularly homotopic. If the surface is non-orientable, then homotopic embeddings are regularly homotopic provided that the normal Euler numbers of the embeddings agree, see Theorem~\ref{l:Gaba}. 

In \S\ref{s:EuBr} we review the notion of an unknotted projected plane as well as the definition of the normal Euler number. In \S\ref{s:sth} we prove the splitting theorem (Theorem~\ref{th:1a}). The hypothesis in Theorem~\ref{th:1a} that there exists a geometric dual $G$ is essential. In \S\ref{s:nu} we give examples of surfaces with no geometric duals that do not admit splittings. In general, the isotopy class of the surface $S\#R$ in the pair $(X, S)\#(Y, R)$ may change when $S$ and $R$ are changed by isotopy. In contrast, in \S\ref{s:nu} we show that the isotopy class of $S\#R$ is well defined when $Y$ is a sphere, see Lemma~\ref{welld}. Lemma~\ref{welld} is essential for the proof of the Gabai theorem for non-orientable surfaces (Theorem~\ref{th:6b}). Another preliminary statement is proved in \S\ref{s:rhs} where we show that homotopic surfaces with the same normal Euler numbers are regularly homotopic. Theorem~\ref{th:6b} is proved in \S\ref{s:Gabai}.  Finally, Theorem~\ref{th:8a} is proved in \S\ref{s:7}. 

Throughout the paper we work in smooth category. All $4$-manifolds are orientable but not necessarily closed. Unless stated otherwise, all surfaces in $4$-manifolds are embedded and connected. 

We thank Rob Schneiderman for generously sharing his detailed thoughts on earlier drafts of this paper, including proposing Examples~\ref{ex:5rob} and \ref{ex:6rob}, and providing Figures~\ref{W-D-ints-and-W-move}, \ref{W-D-ints-and-finger-moves},  \ref{single-finger-move-w-disk-intersects-splitting-disk},  \ref{two-finger-moves-and-W-disks-intersect-splitting-disk}, and \ref{push-self-int-across-splitting-disk}.  We are also grateful to Victor Turchin and referees for helpful suggestions and comments.

\section{Background}\label{s:EuBr}

A \emph{pair} of manifolds $(X, S)$ is a manifold $X$ together with an embedded submanifold $S$. The connected sum of pairs \cite{Ko} is denoted by
\[   
      (X_0, S_0)\,\#\, (X_1, S_1) = (X_0\,\#\, X_1, S_0\,\#\, S_1).
\]

Given a possibly non-orientable surface $S$ embedded in an oriented $4$-manifold $X$, one may define the normal Euler number $e(S)$. Take a small generic displacement $\tilde{S}$ of $S$ in the normal directions and count the algebraic number of intersection points in $S\cap \tilde {S}$. The sign of an intersection point $p$ is positive (respective, negative) if $(e_1, e_2, \tilde{e}_1, \tilde{e}_2)$ is positively (respectively, negatively)  oriented, where $e_1, e_2$ is an arbitrary basis of the tangent space $T_pS$ and $\tilde{e}_1$ and $\tilde{e}_2$ the image of $e_1$ and $e_2$ in $T_p\tilde{S}$.  
\begin{remark}\label{rk:newd}
The normal Euler number is well defined up to regular homotopy, i.e., homotopy through immersions.
\end{remark}

\begin{figure}[h]
\centering
\begin{minipage}{.5\textwidth}
  \centering
\includegraphics[width=\textwidth]{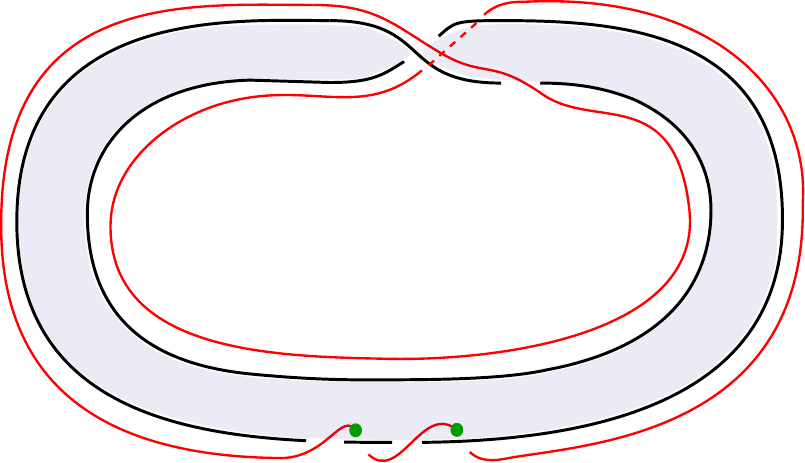}
\caption{The positive unknotted projective plane $P_+^2$ with push-off.}
\label{fig:proj}
\end{minipage}%
\begin{minipage}{.5\textwidth}
  \centering
  \includegraphics[width=0.8\textwidth]{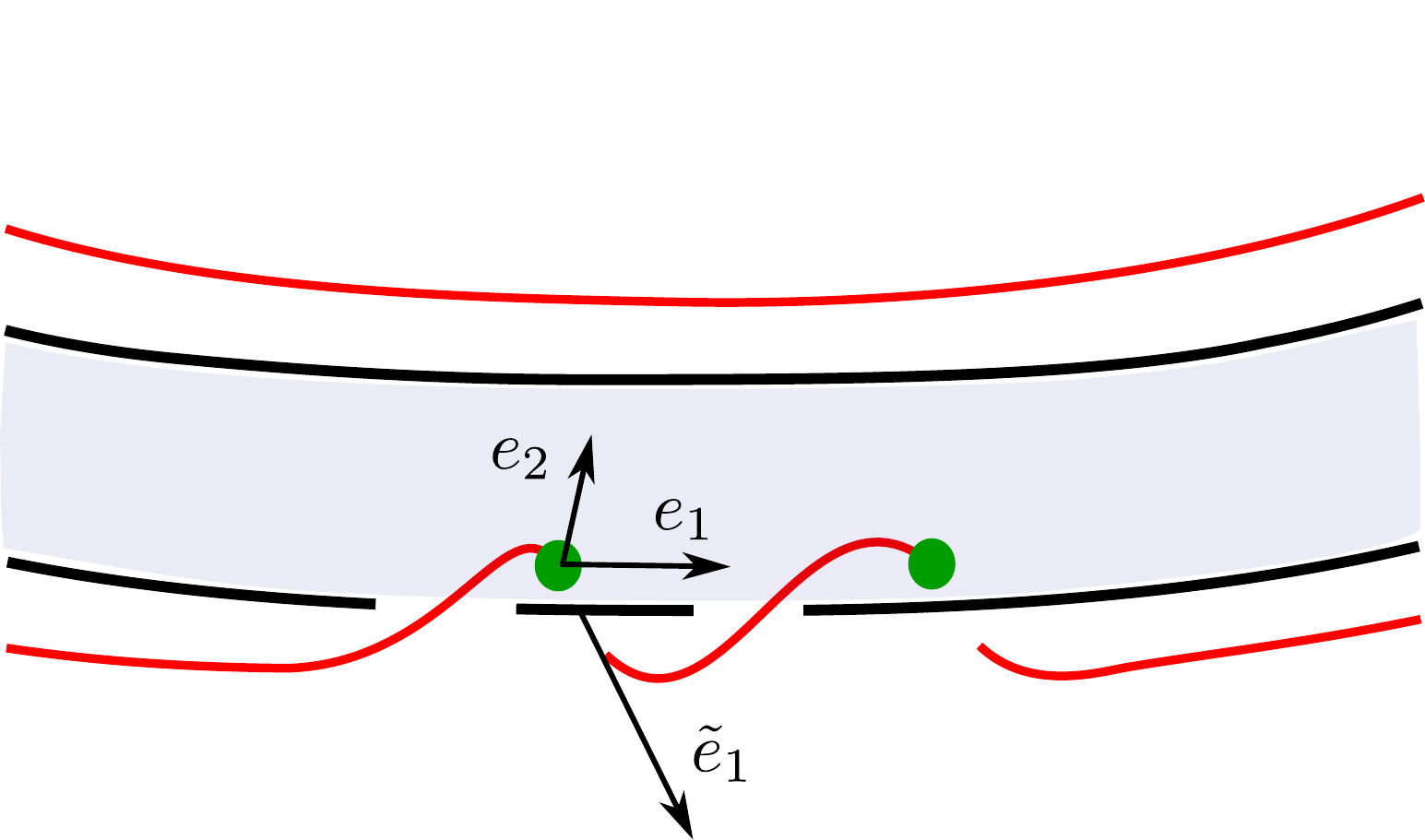}
\caption{The normal Euler number of $P_+^2$ is $-2$. Not shown is the vector $\tilde{e}_2$ directed into the interior of the lower half space $\R^4_-$.}  \label{fig:3cd}
\end{minipage}

\end{figure}

The \emph{positive unknotted} projective plane $P_+^2$   in $\R^4\subset S^4$ is obtained by capping off the gray right-handed M\"{o}bius band in Figure~\ref{fig:proj} with a disc $D$  in the upper half space $\R^4_+= [0,\infty)\times\R^3$. There is a displacement $\tilde D$ of $D$ in  $\R^4_+$ that has an empty intersection with $D$. It is bounded by the red curve in Figure~\ref{fig:proj}.  This curve has zero linking number with the boundary of the M\"{o}bius band. The red curve may be extended  to the lower half space $\R^4_-$
and then capped with a red M\"obius band to obtain a displacement $\tilde{P}_+^2$. The only points of intersection are the two green points in Figure~\ref{fig:proj} and \ref{fig:3cd}.  Orienting the tangent space of the grey M\"{o}bius band at a green point by vectors $e_1$ and $e_2$ and taking  displaced vectors  $\tilde{e}_1$ and $\tilde{e}_2$ in the tangent space of the red M\"{o}bius band, we can see that the two intersection points are counted negatively.
Therefore, the normal Euler number of $P_+^2$ is negative two. 
The negative unknotted projective plane $P_-^2$ is obtained by capping off a left-handed M\"{o}bius band with a disc in the upper half space $\R^4_+$. Its normal Euler number is $2$.

 \begin{remark}   Let $r$ be the linear transformation of $\R^4$ given by $(t, x, y, z)\mapsto (-t, -x, y, z)$. It takes the projective plane obtained by capping off the right-handed M\"{o}bius band with a disk in the upper half-space to the projective plane obtained by capping off a left-handed M\"{o}bius band with a disc in the lower half space $\R^4_-$.  
\end{remark}

\begin{remark}
There is a different invariant defined when $S$ is a characteristic (possibly non-orientable) closed surface embedded in an orientable  $4$-manifold $X$. It is called the Brown invariant \cite{Br}. The Brown invariant will not play a role in this paper. 
\end{remark}

\section{The splitting theorem} \label{s:sth}
In this section we prove the main splitting theorem. We begin with a simple observation that any splitting is determined by a special disk.

\begin{lemma}\label{l:1} Let $S$ be a closed surface embedded into a possibly non-orientable $4$-manifold $X$. Suppose that there is a closed $4$-disc $U\subset X$ such that the intersection $U\cap S$ is a M\"{o}bius band $M$ and $\partial U$ is 
an embedded submanifold of $X$ intersecting $S$ transversally. Suppose that $\partial M$ is an unknot in $\partial U\cong S^3$. Then $(X, S)$ is diffeomorphic to the connected sum of pairs of manifolds $(X, S')$ and $(S^4, P^2_\pm)$ where $S'$ is a surface obtained from $S\setminus \mathop\mathrm{Int}(U)$ by capping off its only boundary component with a $2$-disc, where $ \mathop\mathrm{Int}(U)$ is the interior of $U$.   
\end{lemma}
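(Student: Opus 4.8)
The plan is to exhibit the diffeomorphism directly by cutting $X$ along $\partial U$ and recognizing the two pieces. Since $\partial U \approx S^3$ is an embedded $3$-sphere in $X$ meeting $S$ transversally in $\partial M$, it separates a neighborhood and we can write $X = \overline{U} \cup_{\partial U} (X \setminus U)$. On the $\overline{U}$ side we have the $4$-disc containing the M\"{o}bius band $M$ with $\partial M$ an unknot in $S^3 = \partial U$. First I would observe that $\overline{U}$ together with $M$ is diffeomorphic, as a pair, to the standard $4$-disc containing a standardly embedded M\"{o}bius band whose boundary is the unknot in $S^3$: indeed any M\"{o}bius band properly embedded in $D^4$ with unknotted boundary is determined up to diffeomorphism of the pair by the parity of its half-twist, hence is isotopic to one of the two standard models, and this is exactly the distinction between $P^2_+$ and $P^2_-$.

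Next I would reconstruct $(S^4, P^2_\pm)$ from this standard model by the reverse of the connected-sum operation. Capping off the M\"{o}bius band $M$ along the unknot $\partial M \subset S^3$ with the standard disc $D \subset D^4_+$ (the complementary $4$-ball, glued along $S^3$) produces precisely $(S^4, P^2_\pm)$, with the sign determined by the half-twist parity of $M$; this is the description of the unknotted projective plane recalled in the Remark following Theorem \ref{th:1a} and in \S\ref{s:EuBr}. On the other side, $X \setminus U$ is obtained from $X$ by removing an open $4$-ball, so $X = (X \setminus U) \cup_{S^3} D^4$, and $S \setminus U$ is a surface in $X \setminus U$ with a single unknotted boundary circle on the $S^3$; capping that circle with the standard disc in the reattached $D^4$ gives $(X, S')$. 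Now, forming the connected sum $(X, S') \# (S^4, P^2_\pm)$ means removing a standard ball-pair from each side — from the $(S^4, P^2_\pm)$ side we remove exactly the ball-pair $(D^4_+, D)$ that we used to cap, and from the $(X, S')$ side we remove the ball-pair that caps $S \setminus U$ — and regluing along the $S^3$ boundaries matching $\partial M$. By construction this reassembles $\overline{U} \cup_{\partial U} (X \setminus U) = X$ with the M\"{o}bius band and the capped-off surface glued back to $S$, i.e. it yields $(X, S)$.

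The bookkeeping step that requires care is the matching of the gluings: I must check that the ball-pairs removed in the connected sum can be chosen so that the identification of their $S^3$-boundaries carries $\partial M$ to $\partial M$ compatibly, and that the resulting diffeomorphism of ambient manifolds $(X \setminus U) \cup D^4 \cong X$ is isotopic to the identity. This uses the uniqueness of the disc bounded by an unknot in $S^3$ up to isotopy — the fact quoted in the Remark that the red curve "bounds a unique disc $D^2$ up to isotopy" — together with the lightbulb-type uniqueness of the collar and the fact that orientation-preserving diffeomorphisms of $S^3$ are isotopic to the identity (Cerf). The main obstacle is precisely this last point: making sure the identification of complementary $3$-spheres respects orientations and the embedded curve $\partial M$ simultaneously, so that the half-twist sign $\pm$ of the split-off $P^2$ is well-defined and the reconstructed pair is genuinely $(X,S)$ rather than $(X,S)$ with an exotic or mirror-image summand. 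Once that is pinned down, the diffeomorphism of pairs asserted in the lemma follows.
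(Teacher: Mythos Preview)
Your overall strategy---cut along $\partial U$, cap each piece with a standard ball-pair $(D^4, D^2)$, and observe that undoing those caps is exactly the connected-sum operation---is the same as the paper's. The paper's proof is essentially two sentences: since $\partial M$ is unknotted in $\partial U$, the boundary of each of the pairs $(X\setminus U,\, S\setminus U)$ and $(\bar U,\, \bar U\cap S)$ is diffeomorphic to $\partial(D^4, D^2)$, so each can be capped by a standard disc-pair, and the connected sum of the two resulting closed pairs recovers $(X,S)$.

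There is, however, a genuine error in your argument. You assert that ``any M\"obius band properly embedded in $D^4$ with unknotted boundary is determined up to diffeomorphism of the pair by the parity of its half-twist, hence is isotopic to one of the two standard models.'' This is false: there exist knotted projective planes in $S^4$ (Kinoshita, Price, and many others), and removing a small standard ball-pair from any of these produces a M\"obius band in $D^4$ with unknotted boundary that is \emph{not} isotopic rel boundary to either standard model. So nothing in the hypotheses of the lemma forces $(\bar U, M)$ to be a standard pair.

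The paper sidesteps this entirely: its proof of the lemma does not claim the capped-off pair $(\bar U, M)\cup (D^4, D^2)$ is an \emph{unknotted} $P^2$, only that it is some $P^2$ in $S^4$ (the notation $P^2_\pm$ here is loose). Unknottedness is established later, in the proof of Theorem~\ref{th:1a}, by the handle-calculus argument that adjusts the twisting $k$ to $0$ or $-1$ and the framing $m$ to $0$ using the transverse sphere $G$. Your bookkeeping about gluing and Cerf's theorem is fine, but you should drop the standardness claim for $M$ and simply record that capping $(\bar U, M)$ yields \emph{some} embedded projective plane in $S^4$; that is all the lemma needs.
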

\begin{proof} Since $\partial M$ is an unknot in $\partial U$, the boundary of each of the pairs $(\overline{X\setminus U}, \overline{S\setminus U})$ and $(U, U\cap S)$
is diffeomorphic to the boundary of the pair $(D^4, D^2)$ of standard discs. In other words, the boundary of each of the two pairs can be capped off by the pair of standard discs to produce pairs $(X, S')$ and $(S^4, P^2_\pm)$ whose connected sum is diffeomorphic to $(X, S)$. 
\end{proof}

 In practice, the closed disc $U$ in Lemma~\ref{l:1} is constructed by taking a closed regular neighborhood of a $2$-disc $D$ such that $\partial D$ is the central closed curve of the M\"{o}bius band $M$, the interior of $D$ does not contain points of $S$ and $D$ is nowhere tangent to $S$. If such a disc $D$ exists, then we say that $D$ is the \emph{core of the splitting} of Lemma~\ref{l:1}.

Suppose a connected surface $S$ possesses a geometric dual $G$. Given another surface $R\subset X$, an intersection point $p\in S\cap R$ can be \emph{tubed off} using $G$ along a path $\gamma$ in $S$ from the point $p$ to $\hat G\cap S$, where $\hat G$ is a parallel copy of $G$, see Figure~\ref{fig:2}.   The result of this procedure is a new surface $\hat S$ obtained from $R$ by taking the union of $R$ and a copy  $\hat G$, removing a disc neighborhood $D_G$ of $\hat G\cap S$ in $\hat G$, removing a disc neighborhood $D_S$ of $p$ in $R$,  attaching a tube $S^1\times [0,1]$ along $\gamma$  to the two new boundary components of $R\setminus D_S$ and $\hat G\setminus D_G$, and smoothing the corners.    

\begin{figure}[h]
\centering
\begin{minipage}{.5\textwidth}
  \centering
\includegraphics[width=0.6\textwidth]{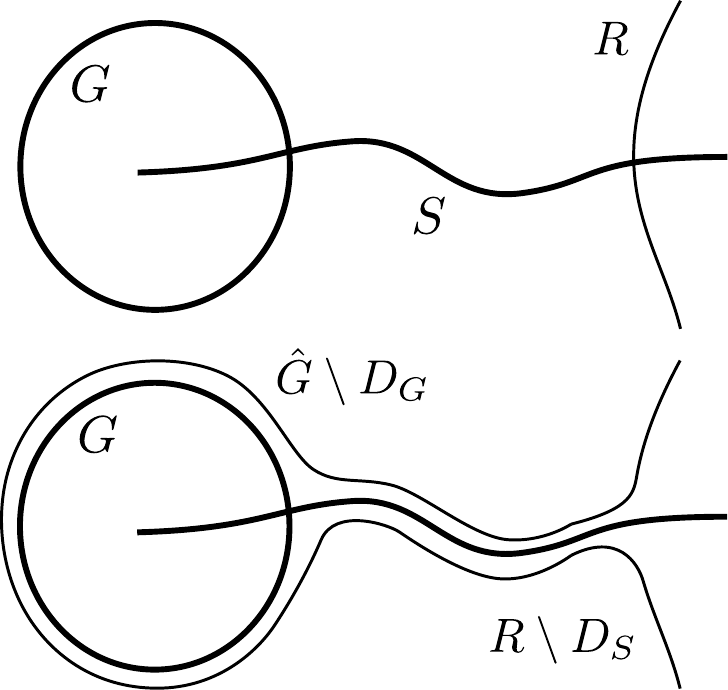}
\caption{Using a geometric dual to tube off an intersection point.}  \label{fig:2}
\end{minipage}%
\begin{minipage}{.5\textwidth}
  \centering
  \includegraphics[width=0.55\textwidth]{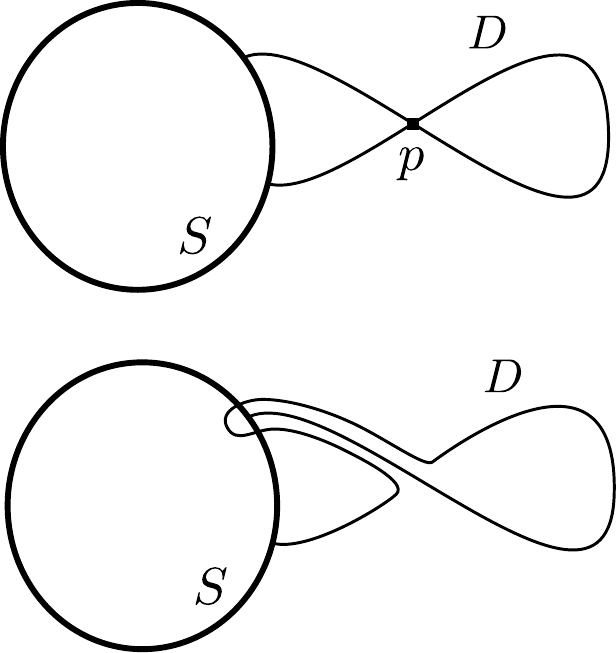}
\caption{Using a finger move to remove an intersection point.}  \label{fig:3c}
\end{minipage}

\end{figure}

\begin{proof}[Proof of Theorem~\ref{th:1a}]  Let $\alpha$ be a simple closed orientation reversing curve in $S$ that is disjoint from $G$. 
Since $S$ is $G$-inessential, the curve $\alpha$ bounds an immersed disc $D$ in $X\setminus G$. 

Using finger movers, boundary twists, and tubing with the geometric dual, we may convert $D$ into the core of a splitting. Indeed,  we may join any self intersection point $p$ of $D$ with a point in $\partial D$ by a  curve and  use a finger move to eliminate the self intersection point $p$ of $D$, see Figure~\ref{fig:3c}. By repeating this procedure we obtain an embedded disc $D\subset X\setminus G$ that may intersect $S$ in interior points.  Since $S\setminus \alpha$ is path connected, for each point $p$ in $D\cap S$ there is a path from $p$ to the unique intersection point $G\cap S$. Thus, we may use the geometric dual $G$ to tube off the intersection points of $D$ with $S$, see Figure~\ref{fig:2}. Thus there exists an embedded $2$-disc $D$ in $X\setminus G$ such that the intersection $D\cap S$ is the curve $\alpha$. Furthermore, we may assume that $D$ approaches $S$ orthogonally (with respect to a Riemannian metric on $X$).

\begin{figure}[h]
\centering
\begin{minipage}{.5\textwidth}
  \centering
\includegraphics[width=0.6\textwidth]{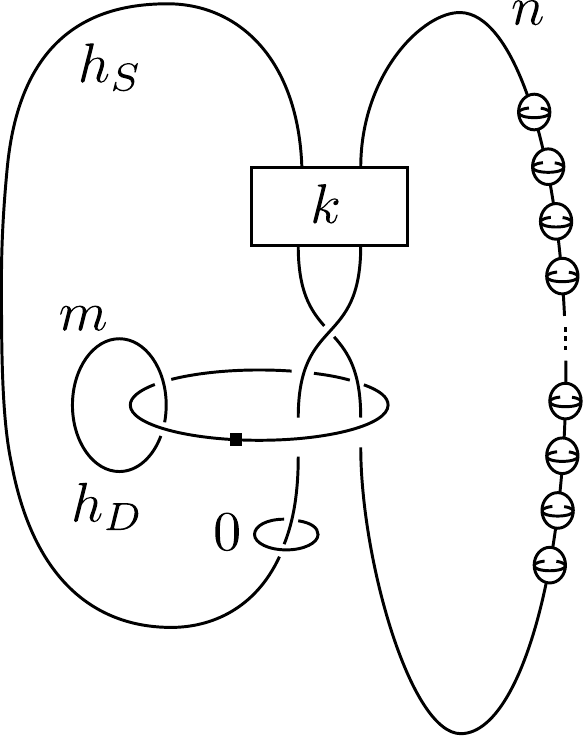}
\caption{A neighborhood of the surface $S$, dual disc $G$ and an embedded disc $D$. }\label{fig:6}
\end{minipage}%
\begin{minipage}{.5\textwidth}
  \centering
  \includegraphics[width=0.6\textwidth]{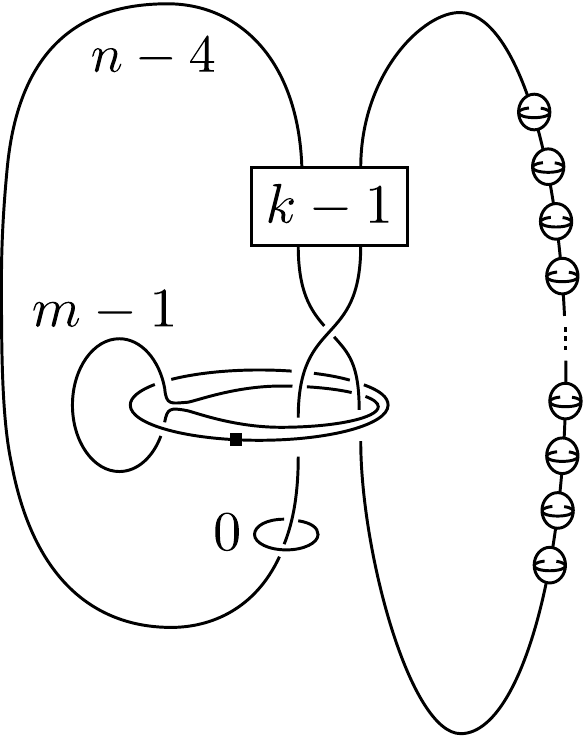}
\caption{A twist of the $1$-handle}\label{fig:4}
\end{minipage}

\end{figure}

 In what follows we will use standard techniques of Kirby calculus including handle notation, handle slides, and handle twisting. We refer the reader to an excellent exposition by Gompf-Stipsicz, see \cite[Chapters 4,5]{GS}. 
A neighborhood $S^1\times D^3$ of $\alpha$ in $X$ is diffeomorphic to the complement in $D^4$ of a neighborhood of $D^2$. 
 This is depicted by a dotted circle representing the boundary of the disc $D^2$, see Figure~\ref{fig:6}. Such a neighborhood  of $\alpha$ already contains the M\"{o}bius band neighborhood of $\alpha$ in $S$. With respect to a trivialization of $S^1\times D^2$, it twists $k+1/2$ times for some integer $k$. The rest of a regular neighborhood of $S$ in $X$ is obtained from the described neighborhood of $\alpha$ by attaching $1$-handles that correspond to thickenings of $1$-handles of $S$ and one $2$-handle $h_S$ which corresponds to the thickening of the $2$-cell in a cell decomposition of $S$ corresponding to a perfect Morse function. A regular neighborhood of a geometric dual contributes a $2$-handle attached along a meridian of $S$ with zero framing. A regular neighborhood of the disc $D$ also contributes a $2$-handle $h_D$ attached along a circle that passes over the $1$-handle $h_1$ once and which, \emph{a priori}, could be linked with the attaching circle of $h_S$.  Using the geometric dual $G$, the attaching circle of $h_D$ can be unlinked from the attaching circle of $h_S$. Furthermore, since the attaching circle of $h_D$ is isotopic to $\alpha$, it is unknotted.  We denote by $m$ and $n$ the framings of the attaching circles of the $2$-handles $h_D$ and $h_S$ respectively. 

\begin{figure}[h]
  \centering
  \includegraphics[width=0.2\textwidth]{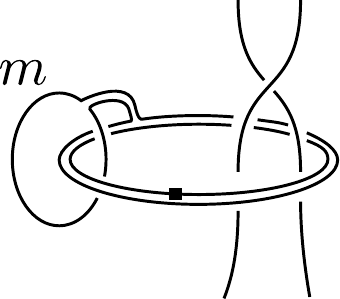}
  \caption{Sliding the $2$-handle $h_D$ over the $1$-handle links the attaching sphere $h_D$ with the attaching sphere $h_S$ and changes the framing by $\pm 2$.}  \label{fig:3}
\end{figure}

Giving one of the attaching discs of the $1$-handle a full left rotation results in linking the attaching circles of $h_D$ with $h_S$ as well as decreasing simultaneously $m$ and $k$ by $1$ and $n$ by  $4$, see Figure~\ref{fig:4}. (A right rotation has the opposite effect.) Thus, we may assume that $k$ is $0$ or $-1$, corresponding to $\pm 1/2$ twists, and $m$ is even. If $k=0$ (respectively, $k=-1$) and $m$ is odd, then a full $-1$ rotation (respectively, $+1$ rotation) of the $1$-handle results in even $m$ and $k=-1$ (respectively, $0$).

Sliding the attaching circle of $h_D$ along $h_1$, links the attaching circles of $h_D$ and $h_S$ and changes the framing $m$ by $\pm 2$, see Figure~\ref{fig:3}. In view of the geometric dual, we may again unlink the attaching circles $h_D$ from $h_S$. 
To summarize, we may assume that in Figure~\ref{fig:6}, the twisting number $k$ is $0$ or $-1$, and the framing $m$ is $0$. 

Let $U$ denote a closed regular neighborhood of the disc $D$; then $U$ itself is a closed $4$-disc. Without loss of generality, we may assume that the attaching sphere for $h_S$ is in $\partial U$. Since $m=0$ and $k$ is $0$ or $-1$, it follows that the attaching sphere for $h_S$ is an unknot.  
Theorem~\ref{th:1a} now follows from Lemma~\ref{l:1}.

\end{proof}

\section{Problems with splitting and sums}\label{s:nu}

\subsection{Non-existence and non-uniqueness of splittings}

In the absence of a geometric dual the splitting surgery along arbitrary $1$-sided curves may not be possible, see Example~\ref{ex:2}.

\begin{example}\label{ex:2} According to the Massey theorem~\cite{Ma69}, there exists an embedding of a closed non-orientable surface $S\subset S^4$ of Euler characteristic $\chi$ with normal Euler number $\nu=2\chi-4, 2\chi, ..., 4-2\chi$. We may choose an embedding so that $\nu\ne \pm 2$. Suppose that there exists a splitting surgery representing $(S^4, S)$ by a connected sum of $(S^4, S')$ and $(S^4, P^2_\pm)$ where $S'$ is a closed orientable surface. Since the normal Euler number of $P^2_\pm$ is $\mp 2$ and the normal Euler number of a closed orientable surface embedded  in $S^4$ is trivial, we deduce that the normal Euler number of their connected sum $S$ is $\pm 2$, which contradicts the assumption that $\nu \ne \pm 2$. Therefore, such a splitting surgery does not exist. 
\end{example}

In fact it may be the case that no splitting is possible as in the following example.
\begin{example}\label{ex:3}
The rational elliptic surface has a cusp fiber $F$. In Kirby calculus, a neighborhood of this fiber is obtained by attaching a $0$-framed  $2$-handle to $D^4$ along a right-handed trefoil. The right handed trefoil bounds an obvious  M\"{o}bius band. Capping the M\"{o}bius band with the core of the $2$-handle results in an embedded $P^2$ representing the fiber class $[F]$. Notice that this class is characteristic. A splitting of the form
\[
(E(1),P^2) \cong (E(1),S^2)\# (S^4,P^2)
\]
would imply the existence of a smoothly embedded sphere representing the fiber class $[F]$ in $E(1)$ contradicting the Kervaire-Milnor theorem~\cite{KM}.
\end{example}

When splitting is  possible, it need not be unique. Indeed changing the homotopy class of the core of the splitting can change the integral homology class of the summands. In Example~\ref{ex:nonun2} we describe a general way to get inequivalent splittings of a map.

\begin{example}\label{ex:nonun2}
Suppose that $D$ is a core of a splitting of  a non-orientable surface $S$ in an orientable $4$-manifold $X$. Suppose that the splitting results in the decomposition
\[
    (X, S) \cong (X, S_D') \# (S^4, P_\pm),
\]
where $S_D'$ is an orientable surface.
Let $A$ be a homotopically non-trivial embedded sphere in $X\setminus (S\cup D)$ with trivial normal bundle. Consider the splitting with the core $D\# A$,
\[
    (X, S) \cong (X, S_{D\# A}') \# (S^4, P_\pm).
\] 
Then $[S_{D\# A}']=[S_D']+2[A]$, where the orientation of $A$ agrees with the orientation of $D\# A$.  
\end{example}

 In the last example,  the $\Z_2$ homology classes of the surfaces $S_{D\# A}'$ and $S_D'$ in the decomposition agree.  
However,  the integral homology classes of the surfaces   are not the same.  Thus, Example~\ref{ex:nonun2} shows that the connected sum  decomposition of pairs is not unique.  This should not be surprising as the connected sum decomposition of manifolds  is not unique in $4$-dimensions. We now give one more example where by changing the core of the splitting we are able to change from splitting off a copy of $P_+$ to splitting off a copy of $P_-$.

\begin{example}\label{ex:nonun}  Let $X$ denote the manifold $\C P^2\#(S^2\times S^2)$ and $S$ the submanifold $S^2\times \{0\}$ in the second factor $S^2\times S^2$ of $X$. 
The projective plane in  $(X, S)\# (S^4,P_+)$ has geometric dual $\{*\}\times S^2$ in the connected factor $S^2\times S^2$ of $X$. We claim that we can split off either $P_+$ or $P_-$. Indeed, the $P_+$-splitting is obvious. To describe the $P_-$-splitting, let $D$ denote the core of the  $P_+$-splitting. We note that this corresponds to $k=0$ and $m=0$ in Figure~\ref{fig:6} using the trivialization of $D$.  Replacing $D$ with its connected sum with $\C P^1\subset \C P^2$, results in a model corresponding to $k=0$ and $m=1$, see Figure~\ref{fig:11} where the $2$-handle corresponding to $\C P^1$ is denoted by $H$. To view the neighborhood in  the trivialization of the new disk we apply a twist to the $1$-handle. This model  
 corresponds to $k=-1$ and $m=0$, see Figure~\ref{fig:13}. Now we may slide the handle $D+H$ along the $2$-handle $-G$ twice to obtain the core for a $P_-$-summand. 
In other words, 
\[
    (X,S)\# (S^4,P_+) \cong (X,S')\# (S^4,P_-).
\]
The normal Euler number of $P_-$ is $2$, while the normal Euler number of $P_+$ is $-2$. Consequently, the normal Euler number of $S'$ is $-4$. In fact, $[S'] =  [S]+2[\C P^1]-4[\{*\}\times S^2]$  in the homology group  
$H_2(\C P^2\#S^2\times S^2; \Z)$.
\end{example}

\begin{figure}[h]
\centering
\begin{minipage}{.5\textwidth}
  \centering
\includegraphics[width=0.6\textwidth]{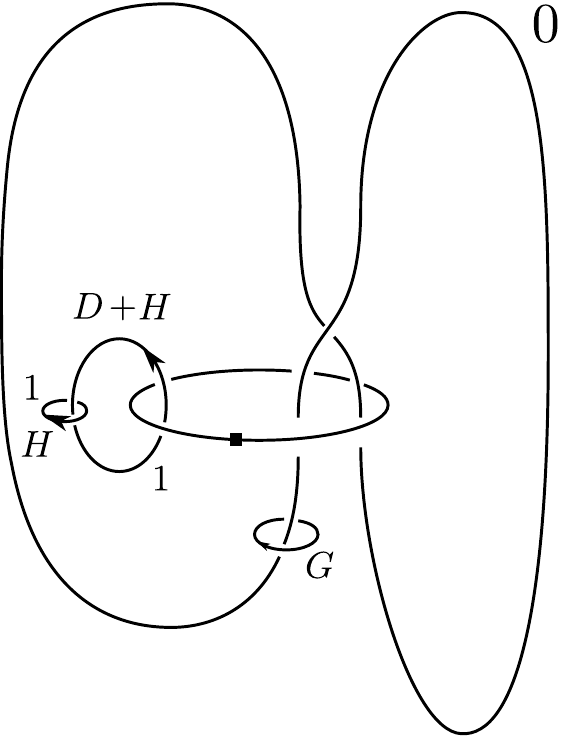}
\caption{The handle $D+u$ of the connected sum of the disc $D$ with $\C P^1$.}\label{fig:11}
\end{minipage}%
\begin{minipage}{.5\textwidth}
  \centering
  \includegraphics[width=0.6\textwidth]{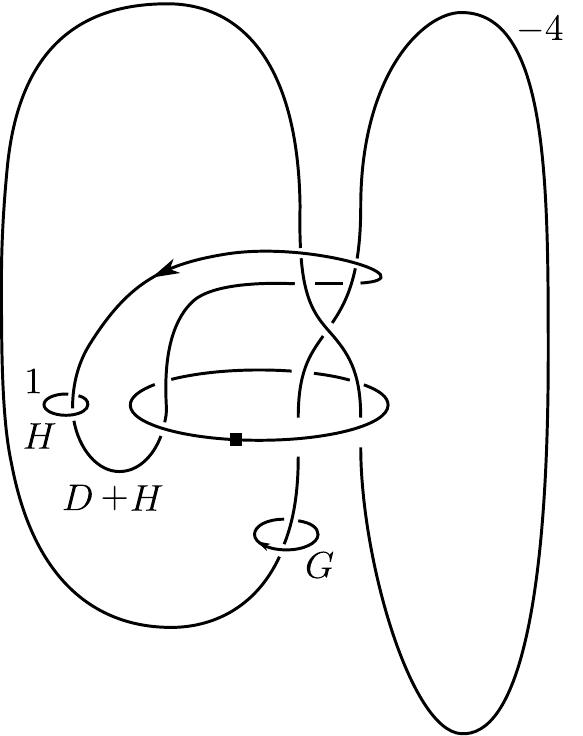}
\caption{A twist of the $1$-handle}\label{fig:13}
\end{minipage}
\end{figure}

The connected sum of two manifolds $X$ and $Y$ is defined by removing coordinate open balls $D_X\subset X$ and $D_Y\subset Y$, and then identifying the new boundaries in $X\setminus D_X$ and $Y\setminus D_Y$ appropriately.

\subsection{Connected sums of pairs}

In general, connected sums of pairs are not well-defined up to isotopy. However, connected sums are well-defined under hypothesis of Lemma~\ref{welld}. In this subsection we will prove Lemma~\ref{welld}; it will be used in the proof of Theorem~\ref{th:6b}. 

The connected sum of pairs of manifolds is defined similarly by means of pairs of balls. We say that $(D_X, D_S)\subset (X, S)$ is a \emph{pair of coordinate open balls} if $(\bar{D}_X,\bar{D}_S)$ is an embedded pair of closed discs, and $\partial D_X$ intersects $S$ along $\partial D_S$, and the pair $(D_X, D_S)$ is parametrized by a diffeomorphism from the standard pair of unit balls in $(\R^4, \R^2)$. The connected sum of pairs $(X, S)$ and $(Y, R)$ is defined by removing pairs of coordinate open discs $(D_X, D_S)\subset (X, S)$ and $(D_Y, D_R)\subset (Y, R)$ and then identifying the new boundaries appropriately.  

 In this paper we are interested in internal sums, which are defined by taking the connected sum with $(Y, R)$ where $R$ is a surface embedded in $Y=S^4$. In this case, without loss of generality we may assume that $D_Y$ is the lower hemisphere, and identify $Y\setminus D_Y$ with a closed coordinate ball. Then $X\#S^4$ is canonically diffeomorphic to the original manifold $X$ as the connected sum operation replaces the coordinate open ball $D_X$ with the interior of the coordinate ball $S^4\setminus D_Y$. We will write $S\#_i R$ for the resulting surface in $X=X\# S^4$ when  $i\co D_X\to X$ is a specified inclusion and $D_Y$ is an open lower hemisphere in $Y=S^4$. 

To motivate Lemma~\ref{welld}, we note that in the case of connected sums of pairs of manifolds there is an additional subtlety. Namely, 
let $S_0$ and $S_1$ be two isotopic surfaces in $X$ that agree in a coordinate open ball $D_X$. Furthermore, suppose that $(X, S_0)$ and $(X, S_1)$ share the same pair of coordinate balls $(D_X, D_S)$. Then
the ambient space $X\#Y$ in the pair $(X, S_0)\# (Y, R)$
coincides with the ambient space in the pair $(X, S_1)\# (Y, R)$. 
However,  in general, the surface $S_0\# R$ may not be isotopic to $S_1\# R$ in $X\# Y$, as the isotopy of $X\setminus \{0\}$, where $\{0\}$ is the center of the coordinate ball $D_X$,  may not admit an extension to an isotopy of $X\#Y$.

Given a pair $(X, S)$, we say that a tuple of vectors $v_1,..., v_4$ at a point $x\in S$ is an \emph{adapted} frame if it is a basis for the tangent space $T_xX$ and if the vectors $v_1$ and $v_2$ form a basis for the tangent space $T_xS$. If $X$ and $S$ are oriented, then we additionally require that the basis $\{v_1,..., v_4\}$ for $T_xX$ and $\{v_1, v_2\}$ for $T_xS$ to be positively oriented. 
We note that up to isotopy the pair of the coordinate open balls $(D_X, D_S)$ in $(X, S)$ determines and is determined by the standard coordinate adapted frame 
$e_1,..., e_4$ in $T_{0}D_X\subset TX$.

\begin{lemma}\label{welld} 
Let $S_0$ and $S_1$ be connected isotopic surfaces in an oriented $4$-manifold $X$, and $R$ be a surface embedded in $S^4$. 
Let $i_0$ and $i_1$ be possibly different orientation preserving embeddings of an open coordinate $4$-ball $D^4$ into $X$ such that $(i_kD^4, i_kD^2)$ is a pair of coordinate open discs in $(X, S_k)$ for $k=0,1$. In the case where $S_0$ and $S_1$ are oriented, suppose that the isotopy from $S_0$ to $S_1$ is orientation preserving, and the frames associated with the pairs $(i_kD^4, i_kD^2)$ of coordinate balls are adapted.  
 Then the surface $S_0\#_{i_0} R$ is isotopic to the surface $S_1\#_{i_1} R$ in $X$. 
\end{lemma}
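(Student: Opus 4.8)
The plan is to reduce the statement to the well-known fact that the connected sum with a fixed pair $(S^4,R)$ depends only on the isotopy class of the adapted frame at the basepoint (equivalently, on the isotopy class of the parametrized pair of coordinate balls), together with the observation that, on a \emph{connected} surface $S$ inside a \emph{connected} $4$-manifold $X$, the space of such adapted frames is connected. Concretely, first I would use the isotopy $\Phi_t$ from $S_0$ to $S_1$ (orientation preserving on both $X$ and $S$ in the oriented case): after applying $\Phi_1$ we are reduced to the situation where both surfaces literally equal $S_1$ and we must compare $S_1\#_{i_0'}R$ with $S_1\#_{i_1}R$, where $i_0' = \Phi_1\circ i_0$. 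Thus it suffices to show that for a single surface $S\subset X$, and any two orientation-preserving embeddings $j_0,j_1\co D^4\to X$ whose restrictions give pairs of coordinate balls in $(X,S)$ (with adapted, compatibly oriented frames in the oriented case), one has $S\#_{j_0}R$ isotopic to $S\#_{j_1}R$.

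For that, the key step is an isotopy extension argument. The embeddings $j_0,j_1$ correspond to adapted frames $F_0,F_1$ at points $x_0,x_1\in S$. Since $S$ is connected I can choose a path in $S$ from $x_0$ to $x_1$; parallel transport along a tubular neighborhood of this path produces a path of adapted frames, so I may assume $x_0=x_1=:x$. Now $F_0$ and $F_1$ are two adapted frames at the same point, i.e. two elements of the space of bases $(v_1,\dots,v_4)$ of $T_xX$ with $v_1,v_2\in T_xS$; in the oriented case with the orientation constraints this space is the homogeneous space $\mathrm{GL}^+_2\times \mathrm{GL}^+_2$-type object, which is connected, and in the unoriented case one checks that the relevant component count still allows $F_0$ and $F_1$ to be joined (here one uses that $j_0,j_1$ are globally orientation preserving on $X$, so they cannot differ by a reflection of the ambient $\R^4$). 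Choosing a path of adapted frames from $F_0$ to $F_1$, and then a path of embedded coordinate balls realizing it (shrinking radii if necessary so the balls stay inside a fixed tubular neighborhood of $S$ and meet $S$ in the prescribed $D^2$), I get an ambient isotopy $\psi_t$ of $X$, supported near $x$, with $\psi_0=\id$, $\psi_1\circ j_0 = j_1$, and $\psi_t(S)=S$ for all $t$ outside the small ball — and in fact $\psi_t$ can be taken to restrict to a self-isotopy of the pair. This isotopy then extends canonically over the glued-in $(S^4\setminus D_Y, R)$ by the identity outside a collar, producing an ambient isotopy of $X=X\#S^4$ carrying $S\#_{j_0}R$ to $S\#_{j_1}R$.

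The main obstacle I anticipate is the bookkeeping in the non-orientable (unoriented) case: there the space of adapted frames at a point has two components (differing by whether the normal $2$-frame is positively or negatively oriented relative to a chosen local orientation of $X$), and one must argue that $F_0$ and $F_1$ genuinely lie in the \emph{same} component. This is where the hypothesis that $i_0,i_1$ are \emph{orientation preserving} embeddings of $D^4$ into the oriented $X$ is doing real work: composing with the (orientation preserving) isotopy $\Phi_1$ does not change the component, and an orientation-preserving chart forces the induced splitting of the ambient orientation into (surface-tangent)$\times$(normal) to be consistent, so the two normal $2$-frames represent the same coorientation data up to a path. A secondary technical point is ensuring the path of coordinate balls can be chosen through \emph{coordinate} balls meeting $S$ in the standard $\R^2$ — this is handled by working entirely inside a fixed tubular neighborhood $\nu(S)\cong S\,\tilde\times\,\R^2$ and moving the balls by the flow of compactly supported vector fields tangent to the zero section, so that the pair condition is automatically preserved throughout. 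Once the path of balls is in hand, isotopy extension (Cerf/Palais) and the canonical extension-by-identity across the connected-sum neck finish the argument.
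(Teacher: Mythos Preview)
Your approach is essentially the same as the paper's: reduce via the ambient isotopy to a single surface $S_1$, then use connectedness of the space of adapted frames (together with isotopy extension) to carry one coordinate ball to the other by an isotopy preserving $S_1$ setwise. The paper compresses this to two lines: first isotope so that the tangent parts $v_1,v_2$ and $w_1,w_2$ coincide, then use the ambient orientation to match the normal parts.

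One small correction in your handling of the non-orientable case. You worry that the space of adapted frames at a point has two components and try to resolve this via the hypothesis that $i_0,i_1$ are orientation preserving on $X$; but that hypothesis only forces the \emph{full} $4$-frame to be positive, which both components satisfy, so it does not by itself pick out a component. The actual fix is simpler and uses the non-orientability of $S$ directly: if the tangent $2$-frames $(v_1,v_2)$ and $(w_1,w_2)$ lie in opposite components of $\mathrm{GL}(T_xS)$, transport one of them around an orientation-reversing loop in $S$ (carried by an ambient isotopy fixing $S$ setwise) to flip it. After that the tangent parts can be matched, and then, as you say, the ambient orientation constraint forces the normal $2$-frames into the same component. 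With this adjustment your argument is complete and matches the paper's.
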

\begin{proof}
The ambient isotopy taking $S_0$ to $S_1$ takes the connected sum $S_0\#_{i_0}R$ to $S_1\#_{j}R$   for some embedding $j\co D^4\to X$. Let $\{v_i\}$ and $\{w_i\}$ denote the frames over $(X, S_1)$ corresponding to 
the embeddings $j$ and $i_1$ respectively. Since the frames are adapted, by applying an ambient isotopy of $X$ that fixes $S_1$ setwise, we may assume that $v_1=w_1$ and $v_2=w_2$. Since both $\{v_i\}$ and $\{w_i\}$ are positively oriented frames over $X$, the coincidences $v_1=w_1$ and $v_2=w_2$ imply that
there is an ambient isotopy of $X$ that fixes $S_1$ setwise and takes the frame $\{v_i\}$ to the frame $\{w_i\}$. Thus, the surface $S_0\#_{i_0} R$ is isotopic to the surface $S_1\# R$. 
This completes the proof of Lemma~\ref{welld}. 
\end{proof}

\section{Regularly homotopic surfaces}\label{s:rhs}

At one step in the Gabai's proof of the $4$-dimensional light bulb theorem \cite{Ga} one modifies a given homotopy between orientable surfaces into a regular homotopy. In this section we will show that
the hypotheses on the surfaces in the non-orientable version of the Gabai theorem (Theorem~\ref{th:6b})  guarantee that the (possibly non-orientable) surfaces are still regularly homotopic. 

By the Smale-Hirsch theorem~\cite{Hi59}, the space  of immersions of a manifold $S$ into a manifold $X$ is weakly homotopy equivalent to the space $\Imm^F(S, X)$ of smooth injective bundle homomorphisms $TS\to TX$ provided  that $\dim S< \dim X$ or that $S$ is open. Let $f\co S\to X$ denote a smooth map, and $\mathop\mathrm{Hom}(TS, f^*TX)\to S$ the standard $\mathop\mathrm{Hom}$-bundle with a fiber over $s\in S$ given by homomorphisms $T_sS\to T_{f(s)}X$. Let $V(TS, f^*TX)\to S$ denote the subbundle of the  $\mathop\mathrm{Hom}$-bundle of injective homomorphisms. Then the space of injective bundle homomorphisms $TS\to TX$ covering a smooth map $f\co S\to X$ can be identified with the space of sections of $V(TX, f^*TX)\to S$. In fact,
there is a natural fibration
\[
 \Imm^F(S, X) \to C^\infty(S,X),
\]
where $C^\infty(S, X)$ is the space of smooth maps $f\co S\to X$, with  fiber over  $f$ given by the space
 of sections of the bundle $V(TS,f^*TX)\to S$. 

When the dimension of $S$ is $2$ and the dimension of $X$ is $4$, the fiber of the fiber bundle $V(TS,f^*TX)$ over $S$ is homotopy equivalent to $O(4)$. 
The case where $S$ is a sphere is established by Smale~\cite{Sm}, and it is used in \cite{Ga}. Since we need the non-orientable case, we give a quick outline of its proof here.

\begin{theorem}\label{l:Gaba} Suppose that $f$ and $g$ are two homotopic embeddings of a closed connected surface $S$ into an orientable $4$-manifold $X$. If the surface $S$ is non-orientable, suppose, in addition, that the normal Euler numbers of $f$ and $g$ agree. Then the embeddings $f$ and $g$ are regularly homotopic. 
\end{theorem}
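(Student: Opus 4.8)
The strategy is to use the Smale--Hirsch theorem together with obstruction theory applied to the fibration $\Imm^F(S,X)\to C^\infty(S,X)$ described above. Since $f$ and $g$ are homotopic embeddings, their Gauss lifts $TS\to TX$ give two points in $\Imm^F(S,X)$ lying over the same path component of $C^\infty(S,X)$; after choosing a homotopy $f\simeq g$ and using it to identify fibers, the question of whether $f$ and $g$ are regularly homotopic becomes the question of whether their lifts lie in the same path component of the fiber $\Gamma(V(TS,f^*TX))$ of sections of a bundle with fiber $\simeq O(4)$. The plan is to analyze $\pi_0$ of this section space by a cell-by-cell obstruction argument over a CW-structure of $S$ with one $0$-cell, some $1$-cells, and one $2$-cell.

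First I would set up the obstruction theory. Two sections agree up to homotopy over the $1$-skeleton iff a primary obstruction in $H^1(S;\pi_1(O(4)))=H^1(S;\Z_2)$ vanishes; then over the $2$-cell there is a secondary obstruction (difference class) in $H^2(S;\pi_2(O(4)))$, but $\pi_2(O(4))=0$, so once the sections agree on the $1$-skeleton they are homotopic as sections. Thus the only obstruction to a regular homotopy rel nothing is the primary difference class $d(f,g)\in H^1(S;\Z_2)$. Next I would identify this class geometrically: over each $1$-cell it measures the difference of the two trivializations of the normal bundle (equivalently the two framings of $TS\hookrightarrow f^*TX$) coming from $f$ and $g$, which is exactly the relative normal rotation number; I would also note $\pi_0(O(4))=\Z_2$ contributes an orientation-type obstruction that vanishes here because $f$ and $g$ are homotopic embeddings of the same $S$ (the induced orientation local system on $f^*TX/TS$ matches), and if $S$ is orientable there is an additional $\Z_2\cong\pi_0(\text{fiber over a point})$ subtlety handled by the orientation hypothesis — in the non-orientable case this component obstruction is automatically killed since a reflection is realized by sliding an orientation-reversing loop.

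Then I would evaluate the primary obstruction against homology. Pairing $d(f,g)$ with a class $[\alpha]\in H_1(S;\Z_2)$ represented by a simple closed curve $\alpha$: if $\alpha$ is two-sided, a collar computation shows the pairing is an integer mod $2$ that is homotopy-theoretically forced to vanish because $f\simeq g$ already forces the normal framings to agree along two-sided curves (one can also see this from the fact that the full obstruction is detected by the normal Euler number, see below). If $\alpha$ is one-sided, then $\langle d(f,g),[\alpha]\rangle$ is precisely the mod-$2$ difference of the self-intersection contributions of $f$ and $g$ along a Möbius neighborhood of $\alpha$, and summing over a basis of one-sided classes recovers the difference $e(f)-e(g)\bmod 4$ of the normal Euler numbers, which vanishes by hypothesis. (More precisely, the normal Euler numbers agreeing is the condition that makes all these local pairings vanish simultaneously; this is the standard fact that for non-orientable $S$ the full regular-homotopy obstruction in the relevant stable group is detected by $e(S)$, cf.\ Remark~\ref{rk:newd}.) Hence $d(f,g)=0$, the sections are homotopic, and $f$ is regularly homotopic to $g$.

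The main obstacle I anticipate is the bookkeeping in the last step: carefully matching the cohomological difference class $d(f,g)\in H^1(S;\Z_2)$ with the geometric normal Euler number and verifying that, along one-sided curves, the hypothesis $e(f)=e(g)$ is not merely necessary but exactly the vanishing of every pairing $\langle d(f,g),[\alpha]\rangle$ — i.e.\ that no finer invariant than $e$ survives. This requires knowing that $\pi_2(O(4))=0$ (so there is no secondary obstruction) and a correct identification of how $\pi_1(O(4))=\Z_2$ is detected by framings of the normal line bundle along one-sided loops; the orientable-versus-non-orientable dichotomy in the hypotheses is precisely what makes this identification clean, since in the non-orientable case the extra $\pi_0$ ambiguity is absorbed and in the orientable case it is excluded by the orientation-preserving assumption on the homotopy.
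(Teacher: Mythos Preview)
Your obstruction-theoretic framework has a genuine error in identifying the fiber. For immersions of a \emph{surface} $S$ into a $4$-manifold $X$, the fiber of $V(TS,f^*TX)\to S$ is the space of linear injections $\R^2\hookrightarrow\R^4$, which is the Stiefel manifold $V_2(\R^4)\simeq S^2\times S^3$, not $O(4)$. (The paper's remark that the fiber is $O(4)$ refers to the equidimensional situation, used later for the thickened immersions $\hat f,\hat g\colon N^4\to X^4$.) Consequently $\pi_0(V_2(\R^4))=\pi_1(V_2(\R^4))=0$ and $\pi_2(V_2(\R^4))\cong\Z$, so there is \emph{no} primary obstruction in $H^1(S;\Z_2)$; the sections are automatically homotopic over the $1$-skeleton and the only obstruction lives in $H^2(S;\pi_2)\cong H^2(S;\Z_w)$. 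This $H^2$ obstruction is precisely the difference of normal Euler numbers. Your attempt to pair an $H^1(S;\Z_2)$ class with one-sided curves and ``sum to recover $e(f)-e(g)\bmod 4$'' cannot be made to work: the normal Euler number is a degree-$2$ invariant, and an element of $H^1(S;\Z_2)$ (a $\Z_2$-vector space) cannot detect an integer mod $4$.

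For comparison, the paper's argument is organized differently and avoids this pitfall. It first uses general position to isotope $g$ so that $f$ and $g$ agree on a neighborhood of the $1$-skeleton; then the hypothesis $e(f)=e(g)$ is exactly what is needed to extend the identity isomorphism $N(f)\cong N(g)$ (already given over the $1$-skeleton) across the single $2$-cell. Only after this extension is made does the paper pass to the thickened open $4$-manifold $N=N(f)$ and compare $\hat f,\hat g\colon N\to X$; at that stage the fiber really is $O(4)$, the maps already agree over the $1$-skeleton, and $\pi_2(O(4))=0$ finishes the argument. In short, the Euler-number hypothesis is consumed in constructing $\hat g$, not in an $H^1$ obstruction. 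If you want a purely obstruction-theoretic proof, redo it with fiber $V_2(\R^4)$ and place the single obstruction in $H^2(S;\Z_w)$; that version is correct and in spirit equivalent to the paper's.
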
  
\begin{proof}

Choose a handle decomposition of $S$ with a unique $2$-cell. By a general position argument, we may assume that a homotopy of $f$ to $g$ restricts to an isotopy of a regular neighborhood of the $1$-skeleton of $S$.   Furthermore, the isotopy of this neighborhood extends to an isotopy of the ambient manifold $X$. Thus, we may assume  that $f$ and $g$  agree in the neighborhood of the $1$-skeleton of $S$. Consequently, the normal bundles $N(f)$  and $N(g)$ agree over the same neighborhood.

Since  $f$ and $g$ are homotopic, their normal Euler numbers agree when $S$ is orientable. In particular, in both cases, when $S$ is orientable or non-orientable, under the hypotheses of Theorem~\ref{l:Gaba},  the normal bundle of the immersion $f$ is isomorphic to that of $g$. Even more is true, the isomorphism already given over the $1$-skeleton  extends to an isomorphism $N(f)\cong N(g)$. Let $\hat f$ denote the inclusion of $N=N(f)$ into $X$, and $\hat g\co N\to X$ denote the composition of the isomorphism $N\cong N(g)$ and the inclusion. 
By the homotopy lifting property, a homotopy $\hat f_t$ of $\hat f$ to $\hat g$ extends to a homotopy of $d\hat f$ through injective bundle homomorphisms $TN\to \hat f_t^*TX$. We may assume that the resulting homomorphism $h\co TN\to \hat g^*TX$ agrees with $d\hat g$ over the $1$-skeleton of $S\subset N$. Since the fiber of the bundle $V(TN, \hat g^*TX)\to TN$ is homotopy equivalent to $O(4)$ and $\pi_2(O_4)=0$, the homomorphism $h$ can be further deformed so that it agrees with $\hat g$ over $S\subset N$, see \cite[\S 29]{St} Finally, since $S$ is a deformation retract of $N$, the homomorphism $h$ is homotopic to $\hat g$ through injective bundle homomorphisms. 
It follows that $\hat f$ is regularly homotopic to $\hat g$, hence $f$ is regularly homotopic to $g$.
\end{proof}

\begin{remark}\label{rk:1s}
The argument in the proof of Theorem~\ref{l:Gaba} shows that one may assume that the regular homotopy in the conclusion of Theorem~\ref{l:Gaba} restricts to an isotopy away from any prescribed disk in the surface $S$. 
\end{remark}

\begin{remark} There are many knotted projective planes in $S^4$, e.g, see \cite{Vi}. On the other hand, all projective spaces embedded in $S^4$ are null homotopic, and have normal Euler number $\pm 2$, see \cite{Ma69}. Therefore, 
by Theorem~\ref{l:Gaba} all projective planes embedded in $S^4$ are regularly homotopic to $P_+$ or $P_-$.   
\end{remark}

\section{The Gabai light-bulb theorem for non-orientable surfaces}\label{s:Gabai}

Recently Gabai proved the following theorem, see \cite[Theorem 9.7]{Ga}.  

\begin{theorem}[Gabai, \cite{Ga}]\label{th:5}  Let $X$ be an orientable $4$-manifold such that $\pi_1(X)$ has no $2$-torsion. Two homotopic embedded $G$-inessential orientable surfaces $S_0$ and $S_1$  with common geometric dual $G$ are ambiently isotopic via an isotopy that fixes the geometric dual pointwise. 
\end{theorem}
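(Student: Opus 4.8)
Since Theorem~\ref{th:5} is quoted from \cite[Theorem~9.7]{Ga}, there is nothing new to prove here and one simply cites it; the plan below instead reconstructs the shape of Gabai's argument, which is also the engine for the non-orientable Theorem~\ref{th:6b} of this section (the latter is obtained from Theorem~\ref{th:5} by using Theorem~\ref{th:1a} to split every projective-plane summand of $S_1$ and $S_2$ off into an $S^4$, applying Theorem~\ref{th:5} to the orientable remainders, which share $G$ as a transverse sphere and are regularly homotopic by Theorem~\ref{l:Gaba}, and reassembling via Lemma~\ref{welld}, the normal Euler number hypothesis being exactly what makes the two splittings compatible). The strategy for Theorem~\ref{th:5} itself is to turn the given homotopy into a finite list of moves that can each be realised ambiently, using the transverse sphere to supply embedded Whitney disks and the $2$-torsion-free condition on $\pi_1(X)$ to kill the leftover obstruction.

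First I would replace the homotopy by a controlled regular homotopy. A generic homotopy from $S_0$ to $S_1$ is, up to reparametrisation, a concatenation of ambient isotopies, finger moves, and Whitney moves; in the orientable case the normal Euler numbers coincide automatically, both being the self-intersection of the common class $[S_0]=[S_1]$, so the orientable case of Theorem~\ref{l:Gaba} together with Remark~\ref{rk:1s} lets one assume the homotopy is an isotopy away from a single disk. The problem then becomes: realise each Whitney move by an ambient isotopy, after possibly introducing auxiliary finger/Whitney pairs, which requires the relevant Whitney disks to be embedded and disjoint from the surface.

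The heart of the matter, and the step I expect to be the main obstacle, is the construction of those embedded Whitney disks via the $4$-dimensional light bulb trick. An interior intersection of a Whitney disk with $S_i$ is removed by tubing the disk into a parallel copy $G'$ of $G$, as in Figure~\ref{fig:2}; since $G$ is a $2$-sphere with trivial normal bundle meeting the surface transversally in a single point, such tubes can be slid and spun so that, after further finger moves, the disks are again embedded, and the $G$-inessential hypothesis is precisely what permits this cleanup to be carried out inside $X\setminus\nu(G)$. Keeping accurate track of the genus and of the new intersections that the tubes introduce is Gabai's \emph{shadowing} bookkeeping, and this is the delicate core of the proof.

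Finally I would dispose of the algebraic obstruction. After the geometric cleanup one is left with a regular homotopy, and the obstruction to promoting it to an isotopy is a secondary, Freedman--Quinn-type self-intersection invariant valued in a quotient of $\Z_2[\pi_1(X)]$ in which a class is identified with its inverse; the hypothesis that $\pi_1(X)$ has no $2$-torsion forces this invariant to vanish, so the double points produced along the way can be cancelled in pairs by embedded Whitney disks. Concatenating all the moves yields an ambient isotopy of $X$ taking $S_0$ to $S_1$, and because every modification was performed using parallel copies $G'$ in the complement of a neighbourhood of $G$ rather than moving $G$ itself, this isotopy fixes $G$ pointwise, which is the final assertion of the theorem.
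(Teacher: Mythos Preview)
You correctly identify that Theorem~\ref{th:5} is quoted from \cite{Ga} and carries no proof in this paper; on that point your proposal and the paper agree exactly.

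Your parenthetical outline of Theorem~\ref{th:6b}, however, skips the main difficulty. You assert that after splitting off projective planes via Theorem~\ref{th:1a} the orientable remainders are regularly homotopic by Theorem~\ref{l:Gaba}, but Theorem~\ref{l:Gaba} presupposes the remainders are already \emph{homotopic}, and that is precisely what can fail: the splitting of Theorem~\ref{th:1a} is not unique (Examples~\ref{ex:nonun2} and~\ref{ex:nonun}), and the paper explicitly warns in \S\ref{s:Gabai} that ``in general the surfaces $S_0'$ and $S_1'$ may not even be homotopic.'' The normal Euler number hypothesis controls which of $P^2_\pm$ is split off, but not the homotopy class of the remaining surface. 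The paper's actual argument therefore inserts two extra steps: Lemma~\ref{ag1sk} first isotopes one surface so that the two agree outside a single disk and are regularly homotopic rel the complement, and then Lemma~\ref{nhs} constructs a core $D_2$ for the second surface with $D_0\cup D_2$ null-homotopic, using an intersection count $\psi$ in the universal cover of $X\setminus G$ (corrected by a self-intersection term so as to be invariant when intersection points slide across $\partial D_0$) to guarantee that the tubing-to-$G$ procedure produces a disk in the right homotopy class. Only after this are the split remainders homotopic and Theorem~\ref{th:5} applicable.
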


In this section
we prove Theorem~\ref{th:6b}, which asserts that the Gabai result is still true for non-orientable surfaces as well, provided that the normal Euler numbers of the surfaces agree. 
\begin{remark}
If a surface $S$ has a geometric dual, then $S$ is ordinary. Thus the Brown invariant does not play a role in this theorem.
\end{remark}
The idea of the proof is to use the splitting theorem  (Theorem~\ref{th:1a}) to reduce the general case of possibly non-orientable surfaces $S_0$ and $S_1$ to the case of orientable surfaces by representing $S_0$ and $S_1$ as internal connected sums of orientable surfaces $S_0'$ and $S_1'$ with unknotted projective planes. As examples in \S\ref{s:nu} show, in general the surfaces $S_0'$ and $S_1'$ may not even be homotopic. Lemma \ref{ag1sk}  below shows that we may assume that the surfaces agree away from an open ball. For surfaces meeting the conclusion of Lemma \ref{ag1sk},  we prove Lemma \ref{nhs1} and Lemma~\ref{nhs} ensuring that there is a splitting such that the surfaces $S_0'$ and $S_1'$ are homotopic.

\begin{lemma}\label{ag1sk}
Let $S_0$ and $S_1$ be regularly homotopic surfaces embedded in a $4$-manifold $X$. Then there exist an embedded surface $S_2$ in $X$ and a regular neighborhood $U$ of a point in $S_0$ such that $S_0$ agrees with $S_2$ on the complement of $U$, and $S_2$ is regularly homotopic to $S_0$ by a homotopy that is constant on the complement of $U$, and $S_1$ is isotopic to $S_2$.
\end{lemma}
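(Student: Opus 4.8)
The statement asserts that a regular homotopy between $S_0$ and $S_1$ can be ``localized'': after replacing $S_1$ by an isotopic surface $S_2$, the difference between $S_0$ and $S_2$ is confined to an arbitrarily small ball, with the corresponding regular homotopy supported in that ball. The plan is to extract from the regular homotopy a finite sequence of elementary moves and then to push all of them into a single disk. First I would invoke Remark~\ref{rk:1s} (or rather its underlying general-position argument applied to a regular homotopy rather than a homotopy): a generic regular homotopy from $S_0$ to $S_1$ can be taken to be an isotopy away from a finite number of ``events'' — births/deaths of pairs of double points and finger moves/Whitney moves — each of which takes place in a small ball in $X$ and within a small disk in the parameter surface $S$. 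Equivalently, one can decompose the regular homotopy as a composition of ambient isotopies of $X$ interspersed with finitely many moves each localized in a disk.

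\textbf{Key steps.} The second step is to collect these localized moves into one place. Since $S_0$ is connected, choose an embedded arc in $S_0$ (or rather, a thickened arc, i.e. a disk in $S_0$) meeting a neighborhood of the point where we want the final discrepancy to sit, and — using that isotopies of $X$ can slide the support of one move along $S_0$ — move the ball supporting each elementary event, one at a time, so that all the events occur inside one fixed small ball $U$ (more precisely, inside the neighborhood of one point of $S_0$). This uses that $S_0$ is connected and that a regular homotopy supported near a disk can be conjugated by an ambient isotopy to be supported near any other disk reached by sliding along $S_0$. After this rearrangement, the regular homotopy from $S_0$ to some surface $S_2$ is constant outside $U$, so $S_0$ and $S_2$ agree outside $U$; and by construction $S_2$ is obtained from $S_1$ purely by the ambient isotopies we have used to slide the events around, hence $S_1$ is isotopic to $S_2$. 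Finally, one checks the regularity and connectivity bookkeeping: throughout, the intermediate surfaces remain connected, and the events we are sliding are genuine regular-homotopy moves, so the homotopy from $S_0$ to $S_2$ is indeed a regular homotopy.

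\textbf{Main obstacle.} I expect the delicate point to be the ``sliding'' step: making precise that a regular homotopy whose support is a ball near a disk in $S_0$ can be pushed, by an ambient isotopy of $X$, so that its support lies in a prescribed ball $U$, and doing so without introducing new intersections between the moving support and the rest of $S_0$. This is a tubular-neighborhood argument along an arc in $S_0$ from the current event to $U$: one thickens the arc to a disk in $S_0$, thickens that to a ball in $X$, and uses an ambient isotopy of $X$ supported in that ball to carry the event into $U$; the subtlety is that successive events must be slid in a consistent order so their supports stay disjoint from one another until they all reach $U$, where they are allowed to overlap since they occur at different parameter times. Once this is set up carefully, the conclusion that $S_0$ and $S_2$ agree outside $U$, that the homotopy is constant (indeed an isotopy, hence trivial) outside $U$, and that $S_1 \approx S_2$, all follow formally.
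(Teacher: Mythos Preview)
Your approach is correct in outline, but the paper's proof is considerably shorter and avoids the sliding step you flag as the main obstacle.

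The paper starts exactly as you do, invoking Remark~\ref{rk:1s} so that the given regular homotopy $R_t$ restricts to an isotopy on $S_0\setminus U$, where $U$ is an open disk complementary to a neighborhood of the $1$-skeleton. But then, rather than decomposing the homotopy into finitely many local events and pushing each one along arcs into $U$, the paper does the whole localization in one stroke: by the isotopy extension theorem, the isotopy $R_t|_{S_0\setminus U}$ extends to an ambient isotopy $J_t$ of $X$, and one simply sets $S_2 := J_1^{-1}(S_1)$ and $\hat R_t := J_t^{-1}\circ R_t$. Then $\hat R_t$ is a regular homotopy from $S_0$ to $S_2$ that is literally constant on $S_0\setminus U$, and $S_2$ is isotopic to $S_1$ via $J_1$.

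Your route --- decompose into finger/Whitney events and slide them one by one into $U$ --- would also work, and gives a more hands-on picture of why the discrepancy can be confined to a disk. But it incurs exactly the bookkeeping you anticipate (ordering the slides, keeping supports disjoint until they reach $U$, tracking the intermediate surfaces), none of which is needed once you observe that conjugating the entire regular homotopy by $J_t^{-1}$ kills its motion on $S_0\setminus U$ all at once. In effect, the paper replaces your finite sequence of local ambient isotopies by a single global one.
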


\begin{proof}
Let $R:I\times S_0\to X$ be a regular homotopy with $R_0(S_0)=S_0$ and $R_1(S_0)=S_1$. By Remark~\ref{rk:1s}, we may assume that $R$ restricts to an isotopy in a neighborhood of a $1$-skeleton of $S_0$ with complement an open disk $U$. By the isotopy extension theorem, there is an ambient isotopy $J:I\times X\to X$ that agrees with $R$ restricted to $S_0\backslash U$. Clearly, the surface $S_2:=J_1^{-1}(S_1)$ is isotopic to $S_1$, and $S_0$ agrees with $S_2$ in the complement to $U$. The required regular homotopy of $S_0$ to $S_2$ fixing the complement of $U$ is given by
$\hat R_t(x):=J_t^{-1}\circ R_t(x)$. 
\end{proof} 

Lemma \ref{ag1sk} is the first step in the proof of Theorem~\ref{th:6b}.  It establishes an isotopy of $S_1$ to $S_2$ so that $S_0$ and $S_2$ agree away from a regular neighborhood of a point. Thus, in the proof of Theorem~\ref{th:6b}, without loss of generality, we may assume that $S_1$ agrees with $S_0$ away from a neighborhood of a point. Lemmas~\ref{nhs1} and \ref{nhs} will establish that there are cores of splittings such that the 
 surfaces $S_0'$ and $S_1'$ obtained by splitting off unknotted projective planes from $S_0$ and $S_1$ respectively are still homotopic.

\begin{lemma}\label{nhs1} Let $X$ be an orientable $4$-manifold. 
Let $S_0\subset X$ be an embedded connected $G$-inessential non-orientable surface for some geometric dual $G$ for $S_0$. Let $D_0$ be a core of a splitting such that $S_0\setminus \partial D_0$ is non-orientable.    Let $S_1$ be a $G$-inessential surface that agrees with $S_0$ away from a disc neighborhood $U\subset S_1$ of a point such that $\partial D_0\cap U=\emptyset$ and $U\cap G=\emptyset$. Suppose that $S_1\setminus G$ is regularly homotopic to $S_0\setminus G$ relative to the complement to $U$ in $X\setminus G$.  
Then there exists  a core $D_1$ of a splitting for $S_1$ that agrees with $D_0$ in a neighborhood of $\partial D_0$  such that the surfaces $S_0'$ and $S_1'$ obtained by splitting are regularly homotopic relative to $G$.
\end{lemma}
\begin{proof}  If the intersection of the interior of $D_0$ and  $S_1$ is empty, then $D_1=D_0$ is a  core of a splitting for $S_1$ with the desired property. In the rest of the argument we will assume that $D_0\cap S_1$ consists of a single point as the general case is similar. 
\begin{figure}[h]
\includegraphics[width=0.8\textwidth]{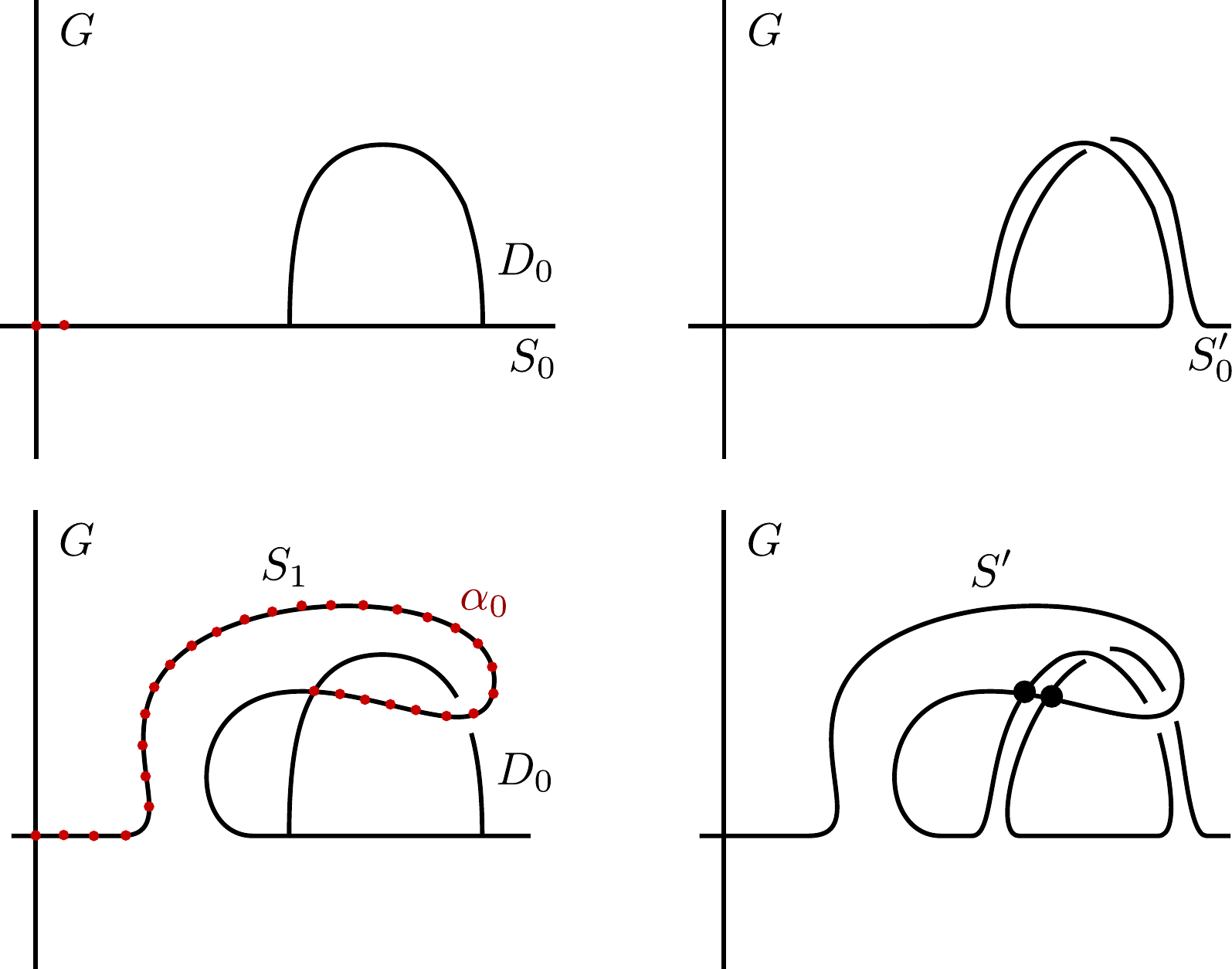}
\caption{Each intersection point in $S_1\cap D_0$ corresponds to two intersection points in $S'\cap S_0'$.}\label{fig:dr11}
\end{figure}
There exists a curve $\alpha_0$ on $S_1$ from the intersection point $D_0\cap S_1$ to a parallel copy of $G$. Let $D_1$ denote the disc $D_0$ with the intersection $D_0\cap S_1$ tubed along $\alpha_0$, see Fig.~\ref{fig:2}. We claim that the splitting $S_0'$ of $S_0$ along $D_0$ is regularly homotopic relative to $G$ to the splitting $S_1'$ of $S_1$ along $D_1$. Indeed, the surface $S_1'$ can be constructed differently. To begin with we may split the surface $S_1$ along $D_0$ to obtain an immersed intermediate surface $S'$ with two self-intersection points (see Fig.~\ref{fig:dr11}), and then we may tube the two self-intersection points of $S'$ along  
respectively two curves $\alpha_+$ and $\alpha_-$ which coincide with $\alpha_0$ except near the end points. In particular, the surface $S_1'$ is obtained by removing from $S'$ two discs $B_\pm$ near the two self-intersection points, and then attaching two discs $D_\pm$ in a neighborhood of $\alpha_\pm\cup G$, see Fig.~\ref{fig:dr10}. 
\begin{figure}[h]
\includegraphics[width=0.45\textwidth]{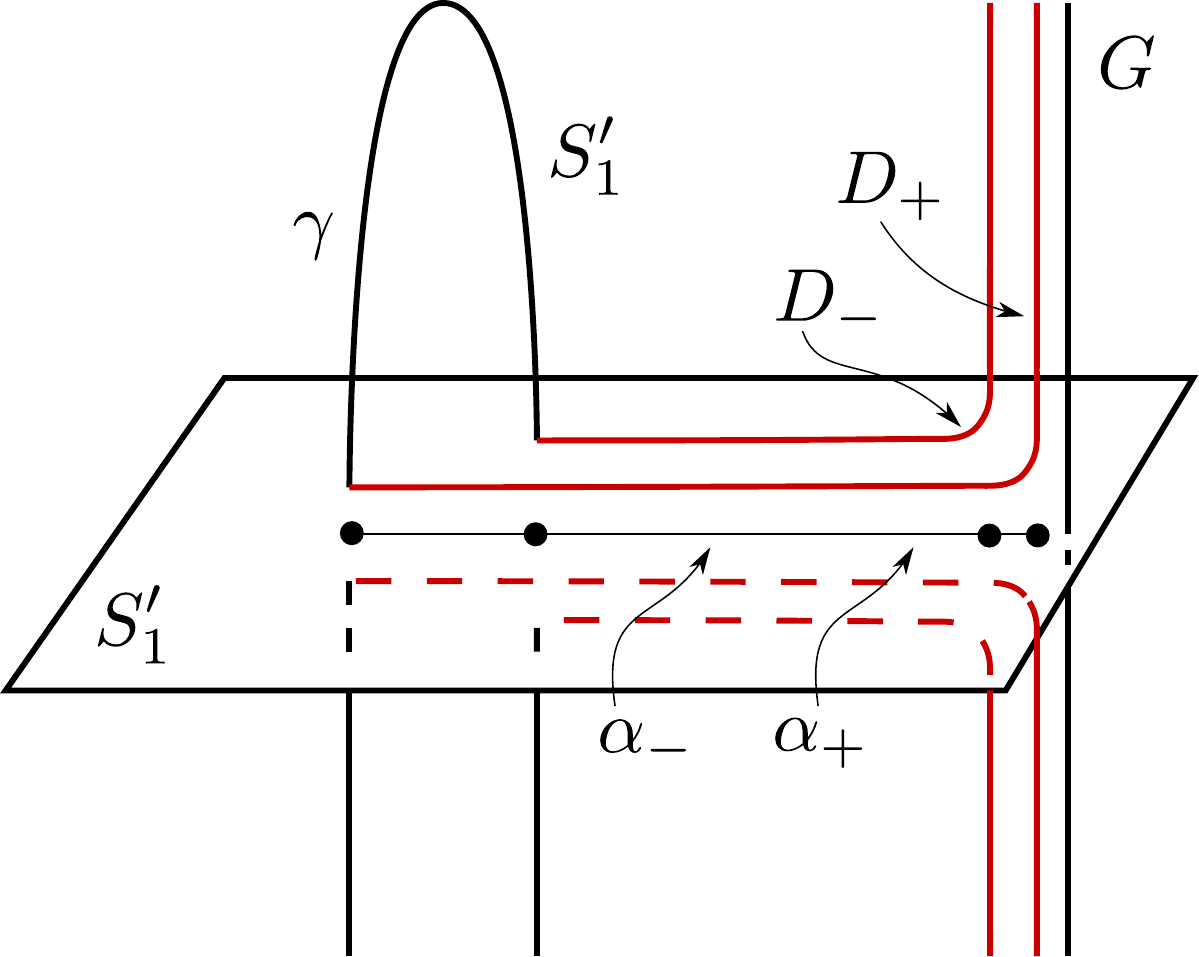}
\caption{Tubing the self-intersection points of $S'$.}\label{fig:dr10}
\end{figure}

Since $S_1$ is regularly homotopic to $S_0$ relative to the complement to $U$, their splittings $S'$ and $S_0'$ along $D_0$ are regularly homotopic. The normal Euler numbers of $S'$ and $S_1'$ are the same since the Euler number of $G$ is trivial. By Theorem~\ref{l:Gaba} it remains to show that $S'$ is homotopic to $S_1'$.

The disc $D_+$ can be obtained from $D_-$ by a $C^0$-small isotopy. Therefore,  an orientation on $D_+$ gives rise to an orientation on $D_-$.  For a curve $\gamma$ in $S_1'$ from $\partial D_+$ to $\partial D_-$, let $N(\gamma)$ denote a closed neighborhood of $\gamma$ in $S_1'$ such that the unions  $D_+\cup D_- \cup N(\gamma)$ and $B_+\cup B_-\cup N(\gamma)$ are boundary connected sums of discs along $\gamma$. 
Since $S_1'$ is non-orientable, we may choose $\gamma$ so that the compatible orientations on $D_+$ and $D_-$ do not extend to an orientation of the disc $D_+\cup D_- \cup N(\gamma)$. 
We denote by $C$ the union of the two discs $D_+\cup D_- \cup N(\gamma)$ and $B_+\cup B_-\cup N(\gamma)$.  We will show that the sphere $C$ is null-homotopic by showing that a lift of $C$ to the universal covering $\widetilde{X\setminus G}$ of $X\setminus G$ is null-homotopic.

Choose a reference lift of a neighborhood $N(D_0)$ of $D_0$, and take lifts of $B_\pm$ contained in the lift of $N(D_0)$ and lifts of $D_\pm$ meeting these lifts of $B_\pm$. Since there is an isotopy taking $D_-$ to $D_+$ keeping the boundary in $N(D_0)$, we see that $D_\pm$ are obtained from $B_\pm$ by tubing with parallel copies of the same lift of a copy of $G$. 
Denote these lifts by $\widetilde{D_\pm}$, $\widetilde{B_\pm}$, and $\widetilde G$. Since $S_1$ is $G$-inessential, $S'$ is $G$-inessential, so the loop obtained by joining the endpoints of $\gamma$ in $N(D_0)$ is null-homotopic and we see that the lift $\widetilde C$ of $C$ containing $\widetilde{B_\pm}$ also contains $\widetilde{D_\pm}$, so in the homology of $\widetilde{X\setminus G}$ we have $[\widetilde C] = [\widetilde G] - [\widetilde G] = 0$. It follows that $C$ is null-homotopic, establishing that $S'$ (and therefore $S'_0$) is homotopic to $S_1'$.

\end{proof}

Up to isotopy (rel boundary), a generic regular homotopy between properly embedded surfaces in a $4$-manifold can be expressed as finitely many finger moves followed by the same number of Whitney moves,
where an \emph{isotopy of an immersion} is the conjugation of the map by diffeotopies of range and domain, e.g., see \cite[\S 1.6]{FQ}, 
\cite[\S 3]{PRT} and 
\cite[Lemma 8]{BT}.
By the Isotopy Extension Theorem an isotopy of an embedding extends to an ambient isotopy, but in general isotopies of immersions are ``non-ambient''.

The following lemma will be used to keep track of intersections between a surface and a splitting disk that are created during a regular homotopy. 

\begin{lemma}\label{lem:control-htpy}
Let $S_0$ be a possibly non-orientable surface properly embedded in a $4$-manifold, and let $D$ be a $2$-disk with $\partial D$ embedded in $S_0$ such that the interior of $D$ is disjoint from $S_0$. If $S_0$ is homotopic (rel boundary) to an embedded surface $S_1$ by a regular homotopy $S_t$, 
then $S_1$ is ambiently isotopic (rel boundary) to the result of the following moves starting with $S_0$:
\begin{enumerate}
\item\label{item:lem-finger-moves-away-from-D}
finger moves supported away from $D$,

\item\label{item:lem-finger-moves-into-D}
finger moves pushing sheets of $S_t\setminus \partial D$ into the interior of $D$, 

\item\label{item:lem-finger-moves-across-boundary-D}
internal finger moves pushing sheets of $S_t\setminus \partial D$ across $\partial D$,

\item\label{item:lem-w-moves-away-from-D}
Whitney moves guided by Whitney disks which are disjoint from $D$.

\end{enumerate}
Each finger move in item~(\ref{item:lem-finger-moves-into-D}) creates a pair of interior intersections between $S_t$ and $D$, and each internal finger move in item~(\ref{item:lem-finger-moves-across-boundary-D}) creates a single interior intersection between $S_t$ and $D$. Here an ``internal finger move'' is guided by an arc contained in $S_t$ that starts at a self intersection of $S_t$ and ends at a point in $\partial D$ (Figure~\ref{W-D-ints-and-finger-moves}).

\end{lemma}

\begin{proof}
By a small perturbation (rel boundary) we may assume that the regular homotopy $S_t$ is generic, and we fix a description of $S_t$ up to isotopy as finger moves followed by Whitney moves.  
The finger moves are supported near arcs which can be assumed to be pairwise disjoint, with each finger move creating two self intersections in the ``middle level'' of the homotopy which is the result of doing all the finger moves. The Whitney moves are supported near pairwise disjointly embedded framed Whitney disks pairing the self intersections of the middle level, with each Whitney move eliminating two self intersections. Each Whitney disk has a neighborhood diffeomorphic to the left side of Figure~\ref{W-D-ints-and-W-move}, with the result of the Whitney move shown in the right side of Figure~\ref{W-D-ints-and-W-move}.

Since they are supported near arcs, the finger moves starting with $S_0$ as in item~(\ref{item:lem-finger-moves-away-from-D}) may be assumed to be supported away from $D$ by general position.
 
The Whitney disks that describe the rest of the regular homotopy will generically have interior and boundary intersections with $D$; see the left side of Figure~\ref{W-D-ints-and-W-move} for an example. These intersections will lead to interior intersections between $S_1$ and $D$, as shown in the right side of  Figure~\ref{W-D-ints-and-W-move}. 

The right side of Figure~\ref{W-D-ints-and-finger-moves} shows how to change a Whitney disk $W$ to a ``smaller'' Whitney disk $W'$ which is disjoint from $D$ by applying finger moves as in items~(\ref{item:lem-finger-moves-into-D}) and~(\ref{item:lem-finger-moves-across-boundary-D}). The combination of these finger moves followed by a Whitney move guided by $W'$ is ambient isotopic to the result of doing a Whitney move guided by $W$. 

The proof is completed by applying the modifications of Figure~\ref{W-D-ints-and-finger-moves} to all the Whitney disks on the middle level, and then doing the Whitney moves on the resulting smaller Whitney disks. The resulting embedded surface is isotopic to, and hence ambiently isotopic to, $S_1$.
\end{proof}

\begin{figure}[ht!]
         \centerline{\includegraphics[scale=.5]{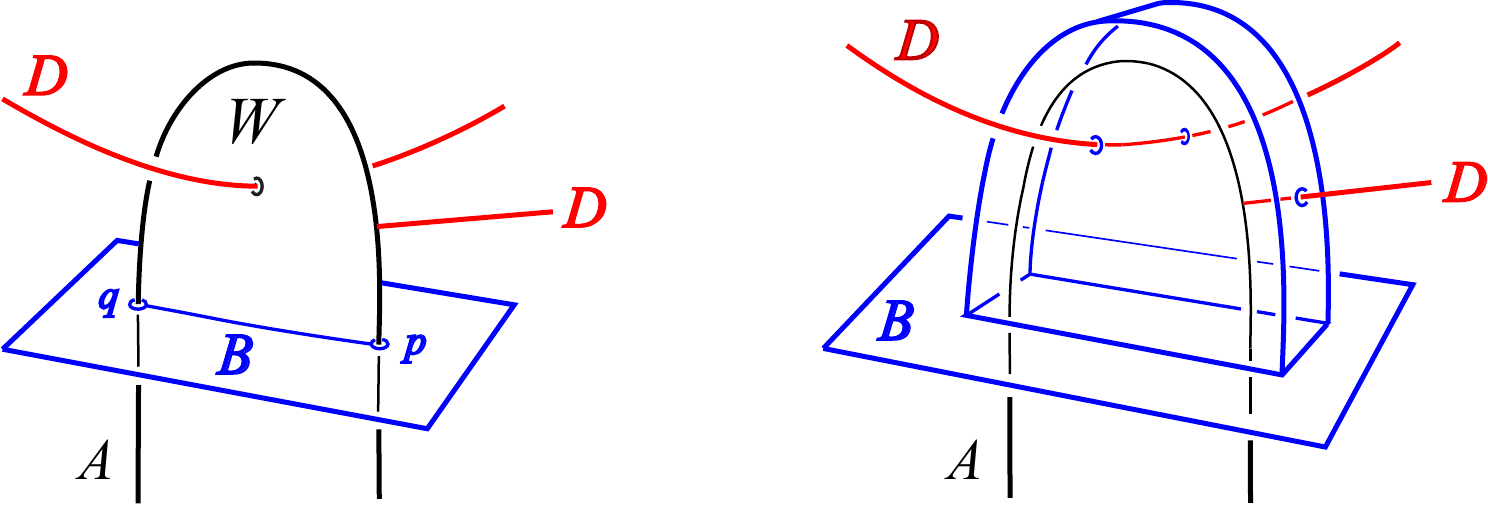}}
         \caption{Left: A neighborhood $B^4=B^3\times I$ of a framed embedded Whitney disk $W$ pairing intersections $\{p,q\}= A\pitchfork B$ between sheets $A$ and $B$ of $S_t$. Here $W$ and $B$ are contained in the `present' while $A$ and $D$ are understood to extend into the `past' and `future' $I$-directions. Right: The Whitney move on $W$ eliminates $p$ and $q$ by moving $B$ across $W$.  Each interior intersection in $W\pitchfork D$ yields two interior intersections between $B$ and $D$, and each intersection in $\partial W\cap\partial D$ yields one interior intersection between $B$ and $D$.}
         \label{W-D-ints-and-W-move}
\end{figure}

\begin{figure}[h!]
         \centerline{\includegraphics[scale=.5]{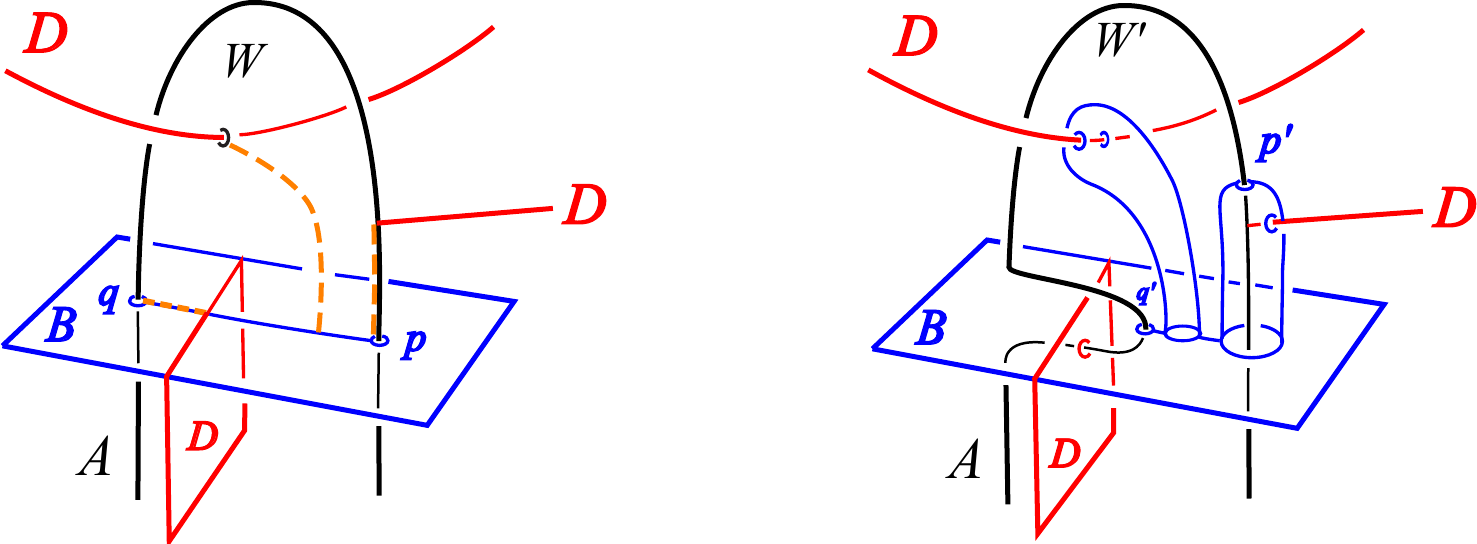}}
         \caption{The dashed orange arcs on the left guide finger moves, yielding $W'$ pairing $q'$ and $p'$ on the right. If $W$ had more interior intersections with $D$, then more finger moves would be done along pairwise disjoint arcs in the interior of $W$. And if $\partial W$ had more intersections with $\partial D$, then the guiding arcs for the internal finger moves along $\partial W$ would be extended across all such intersections (which is the same as iterating the indicated internal finger moves across single boundary intersections).}
         \label{W-D-ints-and-finger-moves}
\end{figure}

The proof of Lemma~\ref{nhs} below also requires a simple case of the self-intersection number of a generic immersed, oriented surface. If $j:R\hookrightarrow X$
is such a surface (generally denoted by $R$), the \emph{double point set} is $D(j):=\{p\in J(R)\,|\, \# j^{-1}(p) >1\}$ and the \emph{self-intersection number} is $R\cdot R := \sum_{D(j)} \epsilon(p)$ where $\epsilon(p)=\pm 1$ is determined by the orientations in the standard way.

\begin{remark}  Let $S_t$ be a non-orientable surface in an orientable $4$-manifold $X$, and $D$ a core of a splitting such that $S_t\setminus \partial D$ is orientable. Let $W$ be a Whitney disc defining a Whitney move on the surface $S_t$. The boundary of the Whitney disc $W$ consists of two arcs $\partial_1W$ and $\partial_2W$ with common endpoints $p$ and $q$. 
Then  the signs of $p$ and $q$ in the definition of the self-intersection number $(S_t\setminus \partial D)\cdot (S_t\setminus \partial D)$ are in general distinct from the relative signs of the double points $p$ and $q$ with respect to the Whitney disc sign convention. Indeed, Fig.~\ref{fig:18} can be used to describe one such example. Suppose that the arc $\partial_1W$ intersects $\partial D$ transversally at a unique point. Then there is a curve $\gamma$ (shown in red in the figure) on $S_t\setminus \partial D$  joining the two endpoints $p$ and $q$ of the arc $\partial_1W$. The relative signs of $p$ and $q$  with respect to the Whitney disc sign convention come from orientations of regular neighborhoods of $\partial_1W$ and $\partial_2W$. The existence of the Whitney framing implies that the points $p$ and $q$ have opposite relative signs. 
 The signs of $p$ and $q$ in the definition of
$(S_t \cap \partial D) \cdot (S_t \cap \partial D)$ are determined by means of regular neighborhoods of $\partial_2W$ and $\gamma$. Since the union
of  $\gamma$ and $\partial_1W$ is an orientation reversing curve on $S_t$, these two sign conventions cannot match.
\end{remark}

\begin{figure}[h]
  \centering
  \includegraphics[width=0.7\textwidth]{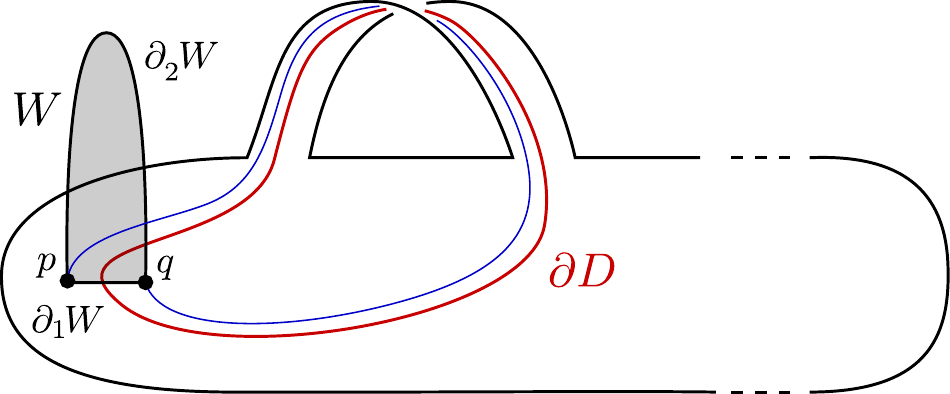}
  \caption{the signs in the definition of the self-intersection number $(S_t\setminus \partial D)\cdot (S_t\setminus \partial D)$ are in general distinct from the relative signs of the double points in the description of a Whitney disk.}  \label{fig:18}
\end{figure}

\begin{remark}\label{rem:10c}
Let $S_0$ be an embedded non-orientable surface in an orientable $4$-manifold $X$ with a core $D_0$ of a splitting such that $S_0\setminus D_0$ is oriented. Then the disc $D_0$ inherits an orientation. Indeed, the splitting of $S_0$ along $D_0$ is obtained by removing a M\"obius band neighborhood of $\partial D_0$ in $S_0$ and attaching a disc $B$ along $D_0$. We orient the resulting surface $S_0'$ so that over $S_0'\setminus B$  the orientation of $S_0'$ agrees with the orientation on $S_0\setminus D_0$. On the other hand, without loss of generality we may assume that near $\partial D_0$ the projection of the tubular neighborhood of $D_0$ onto $D_0$ defines a double cover of a neighborhood of $\partial B$ in $B$ over a neighborhood of $\partial D_0$ in $D_0$. In particular, the disc $D_0$ inherits an orientation.  
\end{remark}

\subsection*{Examples of homotopies with boundary intersections between Whitney disks and spitting disks.}
These examples illustrate how a homotopy can create intersections with a splitting disk coming from boundary intersections with Whitney disks, and also illustrate some of the subtleties regarding the ``opposite sign condition'' for Whitney disks [Freedman--Quinn, Sec.1.4] in the setting of non-orientable surfaces.

Figure~\ref{single-finger-move-w-disk-intersects-splitting-disk} and Figure~\ref{two-finger-moves-and-W-disks-intersect-splitting-disk} describe two regular homotopies from a $G$-inessential embedding $S_0:\R P^2 \hra X^4$ to $G$-inessential embeddings $S_1$ and $S_1'$ that have interior intersections with a splitting disk $D$ on $S_0$. Both homotopies fix $S_0$ outside the subdisk bounded by the dashed grey circle, and in particular fix neighborhoods of $\partial D$. The preimage of $\partial D$ is shown in red in both figures.
It is assumed that $S_0$ is $G$-inessential, with the geometric dual $G$ suppressed from the figures and assumed to intersect $S_0$ outside the indicated subdisk. 
Fix an orientation of $S_0\setminus \partial D$.

\begin{figure}[h!]
         \centerline{\includegraphics[scale=.45]{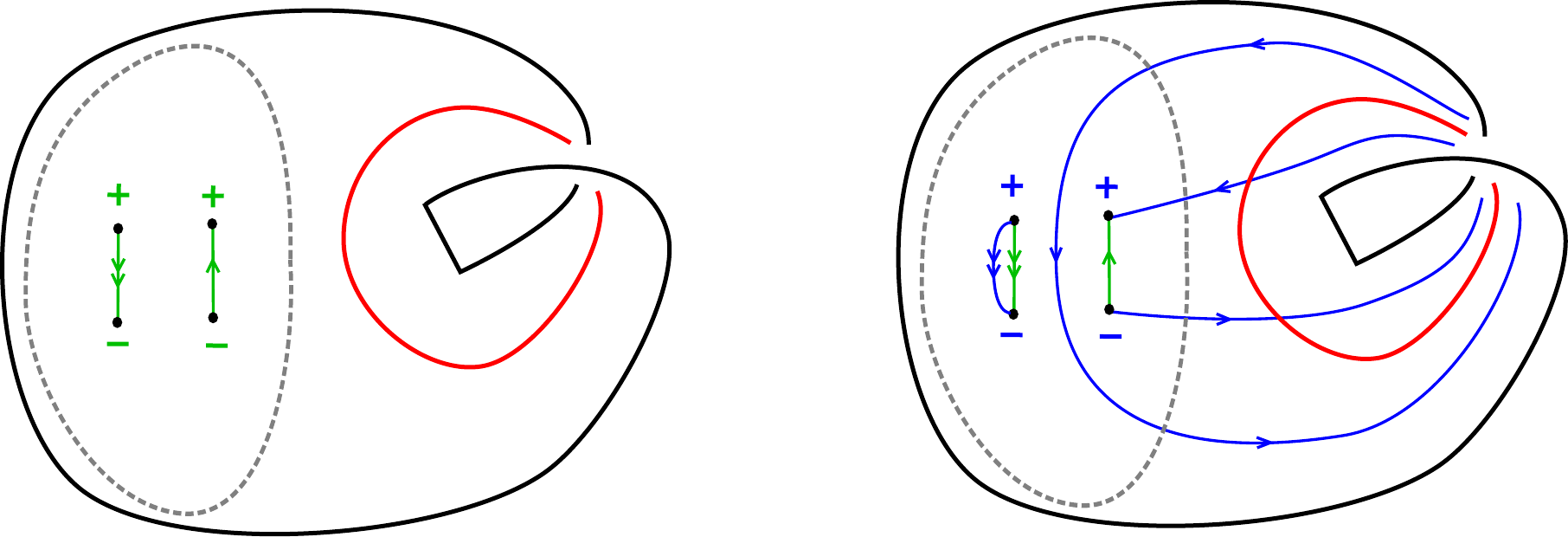}}
         \caption{Left: The preimage (green) of the boundary of a Whitney disk on $S_{0.5}$ that is `inverse' to a finger move on $S_0$. Right: The preimage (blue) of the boundary of a Whitney disk that leads to $S_1$.}
         \label{single-finger-move-w-disk-intersects-splitting-disk}
\end{figure}

\begin{example} \label{ex:5rob} The left side of Figure~\ref{single-finger-move-w-disk-intersects-splitting-disk} shows the preimage of $S_{0.5}:\R P^2\imra X$ which is the result of performing a single finger move on $S_0$. The green arcs are the preimage of the boundary of a Whitney disk `inverse' to the finger move. Horizontally adjacent pairs of black dots map to the same self intersection of $S_{0.5}$, with the lower pair mapping to the negative self intersection, and the upper pair mapping to the positive self intersection.

The right side of Figure~\ref{single-finger-move-w-disk-intersects-splitting-disk} also shows the preimage of $S_{0.5}$, but now with the blue preimage of the boundary of a new Whitney disk $W$ such that $\partial W$ intersects $\partial D$ in two points (the green arcs are just shown for reference to the left picture). The existence of a framed $W$ with the indicated boundary follows from the $G$-inessential assumption: $\partial W$ bounds an immersed disk in $X\setminus G$ which can be promoted to an embedded framed Whitney disk with interior disjoint from $S_{0.5}$ and $G$ by boundary-twisting, pushing-down, and tubing into $G$. Note that the ``opposite-signs'' condition for intersections to be paired by a framed Whitney disk is satisfied by the blue Whitney arcs since one blue Whitney arc ``goes \emph{twice} over the twisted $1$-handle'' of $S_{0.5}$.

Performing a $W$-Whitney move which moves a neighborhood in $S_{0.5}$ of the image of the short blue double-arrow boundary arc across $W$, while fixing a neighborhood of the image of the long blue single-arrow boundary arc, completes the homotopy to a $G$-inessential embedding $S_1:\R P^2\hra X$. The two intersections between $\partial W$ and $\partial D$ in $S_{0.5}$ yield two interior intersections between $S_1$ and $D$; compare Figure~\ref{W-D-ints-and-W-move}.
By the proof of Lemma~\ref{nhs} below these interior intersections have opposite signs in the complement of $\partial D$, so can be eliminated by tubing $D$ into $G$, yielding a splitting disk for $S_1$ with the same homotopy class rel boundary as $D$.
\end{example}

\begin{figure}[h!]
         \centerline{\includegraphics[scale=.5]{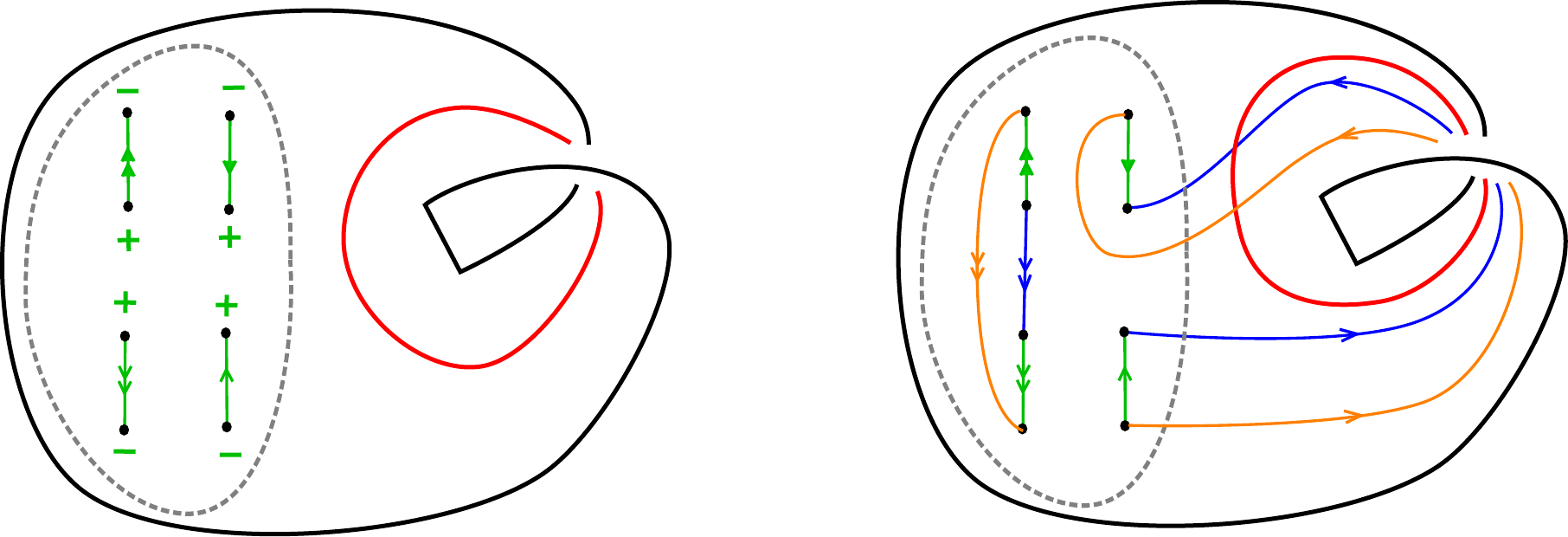}}
         \caption{Left: After two finger moves on $S_0$. Right: Before two Whitney moves lead to $S_1$.}
         \label{two-finger-moves-and-W-disks-intersect-splitting-disk}
\end{figure}

\begin{example}\label{ex:6rob} The left side of Figure~\ref{two-finger-moves-and-W-disks-intersect-splitting-disk} shows the preimage of $S'_{0.5}:\R P^2\imra X$ which is the result of performing two finger moves on $S_0$. The green arcs are the preimages of the boundaries of inverse Whitney disks to these finger moves, similarly to Figure~\ref{single-finger-move-w-disk-intersects-splitting-disk}. We assume that both finger moves are guided by arcs representing equal elements of $\pi_1X$.

The right side of Figure~\ref{two-finger-moves-and-W-disks-intersect-splitting-disk} also shows the preimage of $S'_{0.5}$, but now with the preimages in blue and orange of the boundaries of two new Whitney disks, each intersecting $\partial D$ in a single point (again the green arcs are just shown for reference to the left picture). The existence of appropriate Whitney disks with these boundaries follows from the $G$-essential assumption (similarly to the previous example), since the four self intersections determine the same element of $\pi_1X$.
The opposite-signs condition is satisfied by each of the blue and orange Whitney disk boundaries even though they are pairing
self intersections which are ``like-signed'' using the orientation in the complement of $\partial D$.
This is because one of each of the blue and orange Whitney arcs ``goes once over the twisted $1$-handle'' of $S'_{0.5}$, so the relative signs of the self intersections are switched compared with the green Whitney arcs contained in the complement of $\partial D$.

Performing the blue and orange Whitney moves which move neighborhoods in $S'_{0.5}$ of the images of the short blue and orange double-arrow boundary arcs, while fixing neighborhoods of the images of the long blue and orange single-arrow boundary arcs, completes the homotopy to a $G$-inessential embedding $S'_1:\R P^2\hra X$. Each of the two intersections between the Whitney disk boundaries and $\partial D$ in $S'_{0.5}$ yields an interior intersection between $S'_1$ and $D$, and again by
Lemma~\ref{nhs} these interior intersections have opposite signs in the complement of $\partial D$, so can be eliminated by tubing $D$ into $G$, yielding a splitting disk for $S'_1$ with the same homotopy class rel boundary as $D$.
\end{example}

\begin{lemma}\label{nhs} Let $X$ be an oriented $4$-manifold. 
Let $S_0\subset X$ be an embedded connected $G$-inessential non-orientable surface for a geometric dual $G$ for $S_0$.  Let  $D_0$ be a core of a splitting such that $S_0\setminus \partial D_0$ is orientable. Let $S_1$ be a $G$-inessential surface that agrees with $S_0$ away from a disc neighborhood $U\subset S_1$ of a point such that $\partial D_0\cap U=\emptyset$ and $G\cap U=\emptyset$. Suppose that $S_1\setminus G$ is regularly homotopic in $X\setminus G$ to $S_0\setminus G$ relative to the complement to $U$. Then there exists  a core $D_1$ of a splitting for $S_1$ that agrees with $D_0$ in a neighborhood of $\partial D_0$  such that $D_0\cup D_1$ is null homotopic and the relative normal Euler number of $(D_0, v)$ agrees with that of $(D_1, v)$ with respect to any vector field $v$ over $\partial D_0=\partial D_1$ normal to $D_0$ (and therefore to $D_1$).
\end{lemma}

Given an embedded disc $D$ in a $4$-manifold $X$ and a nowhere zero vector field $v$ over $\partial D$ normal to $D$ in $X$, the relative normal Euler number of $(D, v)$ is defined to be the algebraic number of signed intersection points of $D$ and any of its generic displacement $\tilde D$ such that $\partial \tilde D$ is obtained by displacing $\partial D$ in the direction of $v$.

\begin{proof}

First, let us consider the case where $X\setminus G$ 
is simply connected. If $S_1\cap D_0=\partial D_0$ then $D_1:=D_0$ satisfies the conclusion of the lemma. Otherwise, create $D_1$ from $D_0$ by tubing $D_0$ into parallel copies of the geometric dual $G$ to eliminate the interior intersections $S_1\pitchfork D_0$. Since $G$ is framed, the relative Euler number of $(D_1,v)$ is equal to that of $(D_0, v)$. To show that $D_0\cup D_1$ is null homotopic, it suffices to check that $S_1\pitchfork D_0$ consists of oppositely-signed pairs, where the signs come from choosing and fixing orientations on $S_1\setminus \partial D_0$ and $D_0$, so that the copies of $G$ added to $D_0$ to form $D_1$ come in oppositely oriented pairs which do not change the homotopy class rel boundary of $D_0$.

To check this we may assume that $S_1$ is equal to the result of the finger moves and Whitney moves realizing the regular homotopy $S_t$ as in Lemma~\ref{lem:control-htpy}, since an isotopy of embeddings will extend to an ambient isotopy of $S_1\cup D_1$ which preserves signed intersections.
Define for each generic $S_t$ the following combination of algebraic intersection numbers:
$$
\psi(S_t):=S_t\cdot D_0 + \frac{1}{2}S_t\cdot S_t
$$
where orientations are induced by choosing and fixing orientations on $S_0\setminus \partial D_0$ and $D_0$.

We will show that $\psi$ is invariant under the moves of Lemma~\ref{lem:control-htpy}. This implies Lemma~\ref{nhs} in the simply-connected case. Indeed, 
since $\psi(S_0)=0$, we conclude that $\psi(S_1)=0$ in view of the invariance of $\psi$.  On the other hand, since $S_1$ is embedded, it follows that  $S_1\cdot D_0=0$; i.e., ~$S_1\pitchfork D_0$ consists of oppositely-signed pairs.

Now let us verify that $\psi$ is invariant under the moves of Lemma~\ref{lem:control-htpy}:
Each finger move of item~(\ref{item:lem-finger-moves-away-from-D}) guided by an arc in the complement of $D_0$ creates only a pair of self intersections that admit a local Whitney disk 
whose boundary is disjoint from $\partial D_0$, so the intersection pair has opposite signs using the induced orientation on $S_t\setminus \partial D_0$.

Similarly, each finger move of item~(\ref{item:lem-finger-moves-into-D}) creates only a pair of oppositely signed intersections between $S_t$ and $D_0$ using induced orientation on $S_t\setminus \partial D_0$ and $D_0$.

By Remark~10, we may assume that the fixed orientation on $D_0$ has been chosen so that pushing a self intersection $p\in S_t\pitchfork S_t$ across $\partial D_0$ as in item~(\ref{item:lem-finger-moves-across-boundary-D}) of Lemma~\ref{lem:control-htpy} creates an interior intersection between $S_t$ and $D$ that has the same sign as $p$, and also converts $p$ to $p'\in S_t\pitchfork S_t$ having the opposite sign as $p$ (see Figure~\ref{push-self-int-across-splitting-disk}). (Note that it is with respect to the orientation of $S_t\setminus \partial D_0$ that $p$ and $p'$ have opposite signs, so the self intersections
$p'$ and $q'$ shown in Figure~\ref{W-D-ints-and-finger-moves} still satisfy the ``opposite signs'' criteria for admitting a Whitney disk \cite[Sec.1.4]{FQ}, no matter how many intersections with $\partial D_0$ are crossed by the internal finger moves, since the sheets $A$ and $B$ have only been changed by isotopy.)
\begin{figure}[h]
         \centerline{\includegraphics[scale=.5]{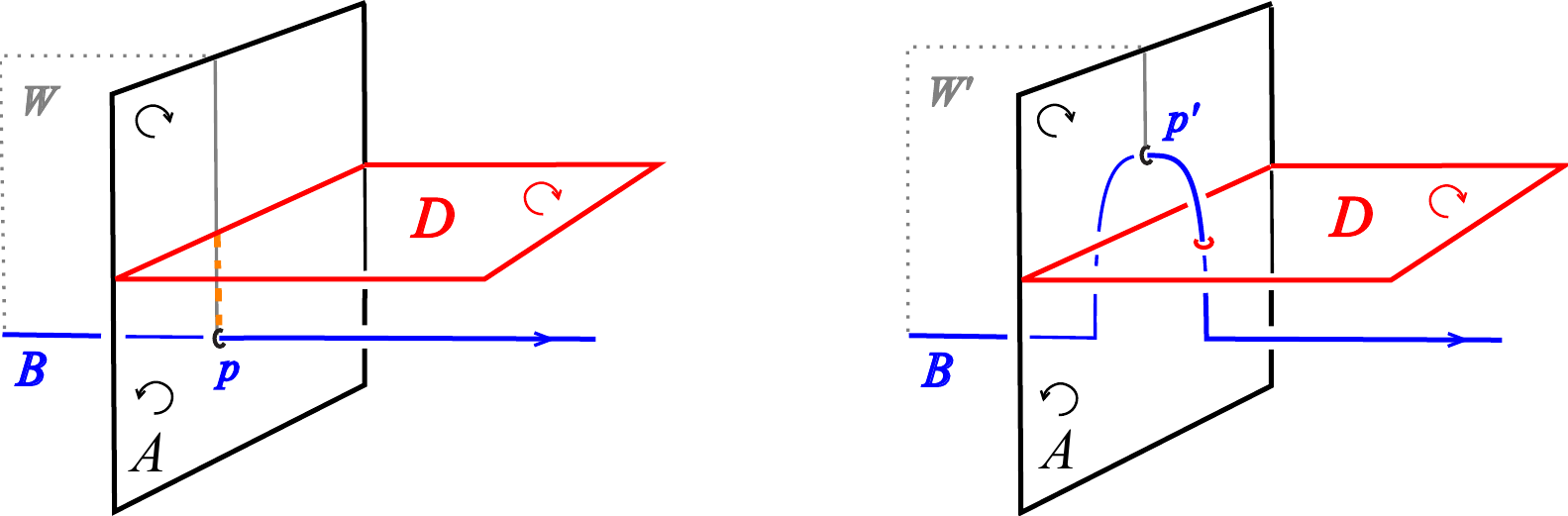}}
         \caption{A `before and after' close-up view of the right-most finger move in Figure~\ref{W-D-ints-and-finger-moves}, but shown in different local coordinates $B^3\times I$: Pushing a self intersection $p$ of $S_t\setminus \partial D$ across $\partial D$ yields a self intersection $p'$  with the opposite sign as $p$, and creates an interior intersection between $S_t$ and $D$ having the same sign as $p$, for $S_t\setminus \partial D$ oriented and $D$ appropriately oriented.}
         \label{push-self-int-across-splitting-disk}
\end{figure}

Finally, each Whitney move as in item~(\ref{item:lem-w-moves-away-from-D}) is guided by Whitney disk whose boundary is disjoint from $\partial D$, hence eliminates a pair of self intersections that have opposite signs using the orientation of $S_t\setminus \partial D$. This completes the proof of Lemma~\ref{nhs} in the simply connected case.

Suppose now that $X\setminus G$ is not simply connected. 
 To establish the  null-homotopy of $D_0\cup D_1$, it will suffice to establish that a certain lift of $D_0\cup D_1$ is null homologous (and therefore null homotopic) in the universal cover of $X\setminus G$. 
By a slight perturbation of $D_0$ with support in the interior, we may assume that $S_1$ is transverse to  the interior of $D_0$. 
By assumption,  a regular neighborhood of $G$ is diffeomorphic to $G\times D^2$. Pick a collection $\{G_k\}$ of distinct parallel spheres;  one sphere $G_k$ for each intersection point in $S_1\cap D_0$. 
 Since $G_k$ and $D_0$ are simply connected, and $S_1$ is $G$-inessential, all lift to copies in the universal cover $\widetilde{X\backslash G}$ of $X\setminus G$. Pick reference lifts $S_1^1$, $G_k^1$ and $D_0^1$ so that $\partial D_0^1\subset S_1^1$ and $G_k^1\cap S_1^1\neq\emptyset$. Let $S_1^\tau$, $G_k^\tau$ and $D_0^\tau$ denote the translates of these chosen lifts by an element $\tau$ of the deck group. The regular homotopy relative to the complement of $U$ lifts to regular homotopies of each $S_1^\tau$ to a surface $S_0^\tau$. As in the simply-connected case define  
\[
\psi(S_t^1,D_0^\tau):=S_t^1\cdot D_0^\tau+\frac12 S_t^1\cdot S_t^\tau.
\] 
One checks that this is invariant under the moves of Lemma~\ref{lem:control-htpy}. Since  $S_1^1$ is embedded, one 
concludes that $S_1^1\cdot D_0^\tau = 0$ and therefore $S_1^\tau\cdot D_0^1 = 0$.  It follows that $D_1^1$ is obtained from $D_0^1$ by adding an algebraically trivial number of copies of  $G_k^\tau$ to $D_0^1$.

\end{proof}

\begin{proof}[Proof of Theorem~\ref{th:6b}]
We first prove the result for surfaces of odd Euler characteristic. By Lemma~\ref{ag1sk} we may isotope $S_1$ to agree with $S_0$ away from a disc neighborhood of a point, and have a further regular homotopy relative to the complement of this neighborhood taking $S_0$ to $S_1$. Without loss of generality, we may assume that the disc neighborhoods  in $S_0$ and $S_1$ are disjoint from $S_0\cap G=S_1\cap G$.
Since the Euler characteristic of $S_0$ is odd, there is a simple closed curve in $S_0$ that has orientable complement and that is disjoint from the disc neighborhood in $S_0$ and from $S_0\cap G$. By the splitting theorem (Theorem~\ref{th:1a}),  this curve bounds a core $D_0$ of a splitting for $S_0$. By Lemma~\ref{nhs}, there is a core  $D_1$ for $S_1$ that agrees with $D_0$ in a neighborhood of $\partial D_1=\partial D_0$ and such that $D_1\cup - D_0$ is null-homotopic.

Let $S_j'$ denote the result of the splitting surgery of $S_j$ along $D_j$ for $j=0,1$. 
It follows that $S_0'$ and $S_1'$ are orientable surfaces in $X$. We claim that $S_0'$ and $S_1'$ are homotopic in $X$. Indeed, let $U_j\subset S_j'$ denote  the complement in $S_j$ to a regular neighborhood of $\partial D_j$. By construction, there is a regular homotopy of $U_0$ to $U_1$ relative to a regular neighborhood of the boundary $\partial U_0=\partial U_1$. On the other hand, since $D_1\cup -D_0$ is null homotopic, the closure of the surface $S_0'\setminus U_0$ is homotopic to the closure of $S_1'\setminus U_1$ relative to the boundary. Therefore $S_0'$ is homotopic to $S_1'$. Furthermore, the surface $S_0$ agrees with the surface $S_1$ in a neighborhood of $S_0\cap G$.  
Thus, $S_0'$ is isotopic to $S_1'$ by the Gabai Theorem (Theorem~\ref{th:5}).   Now $(X,S_0)\cong (X,S_0')\# (S^4,P_\pm)$ and $(X,S_1)\cong (X,S_1')\# (S^4,P_\pm)$ implies that $S_0$ is isotopic to $S_1$ as proved in Lemma~\ref{welld}.

Turn now to the case where $S_j$ is non-orientable with even Euler characteristic. The proof is similar to the odd Euler characteristic case. The difference is that  the complement in $S_0$ of $\partial D_0$ must be non-orientable. Lemma~\ref{nhs1} produces a core of a splitting $D_1$ for $S_1$. Let $S_j'$ denote the result of the splitting of $S_j$ along $D_j$ for $j=0,1$. The surfaces $S_0'$ and $S_1'$ have odd Euler characteristic and  are regularly homotopic. Thus by the odd Euler characteristic case, established in the first part of this proof, we know that $S_0'$ is isotopic to $S_1'$ which using Lemma~\ref{welld} completes the proof.
\end{proof}

\section{Isotopy of surfaces in $4$-manifolds.}\label{s:7}

In view of the Gabai's theorem for non-orientable surfaces (Theorem~\ref{th:6b}), we are in position to extend a recent theorem of \cite{AKMRS} (Theorem~\ref{th:AKMRS}) to the case of non-orientable surfaces. 

\begin{theorem}[Auckly-Kim-Melvin-Ruberman-Schwartz, \cite{AKMRS}] \label{th:AKMRS} If $X$ is a smooth simply-connected $4$-manifold and $\alpha\in  H_2(X)$ is an ordinary class,
then any two closed oriented surfaces $S_0$ and $S_1$ in $X$ of the same genus representing $\alpha$, both with
simply-connected complement, are smoothly isotopic in $X \# (S^2\times S^2)$ (summing away from $S_0 \cup S_1$).
When $\alpha$ is characteristic, the same result holds if one stabilizes by summing with $S^2\tilde\times S^2$. 
\end{theorem}

In Theorem~\ref{th:8a} which extends Theorem~\ref{th:AKMRS} to the case of non-orientable surfaces we assume that $S_0$ is regularly homotopic to $S_1$; note that in a simply connected manifold $X$ orientable surfaces of the same genus representing the same integral homology class are regularly homotopic by Theorem~\ref{l:Gaba}.

\begin{proof}[Proof of Theorem~\ref{th:8a}]
In the case where $S_0$ and $S_1$ are oriented surfaces, Theorem~\ref{th:8a} is established in \cite{AKMRS}. In view of Theorem~\ref{th:6b}, the proof of Theorem~\ref{th:8a} in the non-orientable case follows in the same way as in the orientable case with one minor change.  We provide an abbreviated outline referring to \cite{AKMRS} for longer exposition. As $X\backslash S_0$ is simply-connected, it contains an immersed disk bounded by a meridian of $S_0$. Capping the immersed disc with a fiber of the normal bundle results in an immersed dual $\Sigma$ to $S_0$. If the integral homology class of $S_0$ is ordinary, then, after taking the sum with an immersed sphere disjoint from $S_0$, we may assume that the self-intersection of $\Sigma$ is even.

In contrast to the orientable case, in the present case only the parity of the intersection number $\Sigma\cdot S_0$ is well-defined up to regular homotopy. It follows that $\Sigma\cdot S_1$ is odd. We can reduce the geometric intersection
number $\#(\Sigma\cap S_1)$ down to one. Indeed, if there is excess intersection, pick a pair of intersection points. Given an arc in $S_1$ joining the two intersection points,  one may associate a relative sign to the intersection points via an orientation of  a neighborhood of the path. Since $S_1$ is  non-orientable, one may arrange that the relative sign is negative via a suitable choice of path. 

The proof now continues as in the orientable case in \cite{AKMRS}. Namely, as $X\backslash S_1$ is simply-connected, the union of the chosen path in $S_1$ and a path in $\Sigma$ bounds an immersed disk in the complement of $S_1$. Via finger moves intersections between the disk and $S_0$ may be removed. The correct framing may be obtained by boundary twisting along the portion of the boundary of the disk meeting $\Sigma$. The result is an immersed Whitney disk. Sliding $\Sigma$ across this disk removes a pair of intersection points.

If follows that one may assume that $\Sigma$ intersects $S_0$ and $S_1$, each at exactly one point. In the ordinary case the self-intersection of $\Sigma$ is even. It follows that by taking
 the connected sum of pairs $(X,\Sigma)\# (S^2\times S^2,\{\text{pt}\}\times S^2) = (X\# (S^2\times S^2),\widetilde\Sigma)$ and tubing with copies of $S^2\times \{\text{pt}\}$ one may eliminate the self-intersections of $\widetilde\Sigma$ and adjust the square to zero. The result now follows from the orientable version of the light bulb theorem.
 
 In the characteristic case the self-intersection number of $\Sigma$ is odd. Here one takes the sum  $(X,\Sigma)\# (S^2\widetilde\times S^2,\text{zero section}) = (X\# (S^2\widetilde\times S^2),\widetilde\Sigma)$ to obtain an immersed dual with even square. Tubing with copies of the fiber will remove the self-intersections and adjust the framing to zero. 

\end{proof}

\end{document}